\newcolumntype{C}[1]{>{\centering\arraybackslash$}p{#1}<{$}}
\newcommand{\onevar}{\mathbf{1}}
\newcommand{\zerovar}{\mathbf{0}}
\numberwithin{equation}{section}
\crefname{conjecture}{conjecture}{conjectures}
\Crefname{conjecture}{Conjecture}{Conjectures}
\crefname{observation}{observation}{observations}
\Crefname{observation}{Observation}{Observations}
\crefname{hope}{hope}{hopes}
\Crefname{hope}{Hope}{Hopes}
\newtheorem{theorem}{Theorem}[section]
\newtheorem{proposition}[theorem]{Proposition}
\newtheorem{lemma}[theorem]{Lemma}
\newtheorem{corollary}[theorem]{Corollary}
\newtheorem{conjecture}[theorem]{Conjecture}
\theoremstyle{definition}
\newtheorem{example}[theorem]{Example}
\newtheorem{remark}[theorem]{Remark}
\newcommand{\ssm}{\smallsetminus} %
\DeclareMathOperator{\conv}{conv} %
\DeclareMathOperator{\Trop}{Trop}
\DeclareMathOperator{\Tropplus}{Trop^+}
\DeclareMathOperator{\spec}{Spec}
\newcommand{\ZZ}{\mathbb{Z}}
\newcommand{\NN}{\mathbb{N}}
\newcommand{\RR}{\mathbb{R}}
\newcommand{\QQ}{\mathbb{Q}}
\newcommand{\Sym}{{\mathfrak{S}}}
\newcommand{\Phiplus}{\Phi^+}
\newcommand{\Phipm}{\Phi_{\geq -1}}
\definecolor{darkblue}{rgb}{0,0,0.7} %
\definecolor{lightgrey}{rgb}{0.9,0.9,0.9} %
\definecolor{grey}{rgb}{0.5,0.5,0.5} %
\newcommand{\Dfn}[1]{\emph{\bfseries #1}} %
\newcommand{\x}{\mathbf{x}}
\newcommand{\y}{\mathbf{y}}
\renewcommand{\u}{\mathbf{u}}
\newcommand{\Alg}{\mathcal{A}}
\newcommand{\subwordComplex}{\mathcal{SC}} %
\newcommand{\clusterComplex}{\subwordComplex\big(\cw{c}\big)}
\newcommand{\greedy}{I_{\operatorname{g}}} %
\newcommand{\antigreedy}{I_{\operatorname{ag}}} %
\newcommand{\sq}[1]{{\rm #1}} %
\newcommand{\Q}{\sq{Q}} %
\newcommand{\q}{\sq{q}} %
\newcommand{\s}{\sq{s}} %
\newcommand{\sref}{\mathcal{S}}
\newcommand{\sfr}{{\sf r}} %
\newcommand{\sfR}{{\sf R}} %
\newcommand{\sfw}{{\sf w}} %
\newcommand{\wo}{{w_\circ}}
\renewcommand{\c}{\sq{c}}
\newcommand{\sw}[2]{\sq{#1}(\sq{#2})} %
\newcommand{\cwo}[1]{\sw{\wo}{#1}} %
\newcommand{\cw}[1]{\sq{#1}\cwo{#1}} %
\newcommand{\brickVector}{{\sf b}} %
\newcommand{\Root}[2]{{\sfr}(#1,#2)} %
\newcommand{\Roots}[1]{{\sfR}(#1)} %
\newcommand{\Weight}[2]{{\sfw}(#1,#2)} %
\newcommand{\Newton}[1]{{\operatorname{Newton}\left(#1\right)}}
\newcommand{\set}[2]{\left\{ #1 \;|\; #2 \right\}} %
\newcommand{\bigset}[2]{\left\{ #1 \;\big|\; #2 \right\}} %
\newcommand{\bigmultiset}[2]{\big\{\!\!\big\{ #1 \;\big|\; #2 \big\}\!\!\big\}} %
\newcommand{\bigmultibrackets}[1]{\big\{\!\!\big\{ #1 \big\}\!\!\big\}}
\newcommand{\wordprod}[2]{\Pi{#1}_{#2}} %
\newcommand{\sfM}{{\sf M}} %
\newcommand{\mutmatrix}{\sfM} %
\newcommand{\Asso}{{\sf Asso}}
\DeclareMathOperator{\Assoc}{{{\Asso\left(\mutmatrix_c\right)}}}
\DeclareMathOperator{\Assocbeta}{{{\Asso_{\beta}\left(\mutmatrix_c\right)}}}
\newcommand{\gFan}{{\mathcal F_g}}
\newcommand{\mii}[1]{\overline{#1}}
\newcommand{\mi}[1]{$\overline{#1}$}
\newcommand{\D}[1]{$#1_\Delta$}
\def\ClustComp{S}
\def\fan{\mathcal F}
\newcommand{\TC}[1]{\mathbb{TC}(#1)}
\newcommand{\bigTC}[1]{\mathbb{TC}\big(#1\big)}
\newcommand{\TCclosed}[1]{\overline{\mathbb{TC}}(#1)}
\DeclareMathOperator{\cone}{cone}
\def\RR{\mathbb R}
\def\ZZ{\mathbb Z}
\def\field{\QQ}
\def\ideal{\mathcal I}
\def\variety{V}
\def\weight{w}
\title[Minkowski decompositions for generalized associahedra]{Minkowski decompositions for\\ generalized associahedra of acyclic type}
\author[D.~Jahn]{Dennis Jahn}
\address[D.~Jahn]{Fakultät für Mathematik, Ruhr-Universität Bochum, Germany}
\email{dennis.jahn@rub.de}
\author[R.~L{\"o}we]{Robert L{\"o}we}
\address[R.~L{\"o}we]{Institut f{\"u}r Mathematik, TU Berlin, Germany}
\email{loewe@math.tu-berlin.de}
\author[C.~Stump]{Christian Stump}
\address[C.~Stump]{Fakultät für Mathematik, Ruhr-Universität Bochum, Germany}
\email{christian.stump@rub.de}
\thanks{
  R.L.~is supported by the DFG grant SFB-TRR 109 ``Discretization in Geometry and Dynamics''.
  D.J.~and C.S.~are supported by the DFG Heisenberg grant STU 563/4-1 ``Noncrossing phenomena in Algebra and Geometry''.
}
\begin{document}

\begin{abstract}
  We give an explicit subword complex description of the generators of the type cone of the $g$-vector fan of a finite type cluster algebra with acyclic initial seed.
  This yields in particular a description of the Newton polytopes of the $F$-polynomials in terms of subword complexes as conjectured by S.~Brodsky and the third author.
  We then show that the cluster complex is combinatorially isomorphic to the totally positive part of the tropicalization of the cluster variety as conjectured by D.~Speyer and L.~Williams.
\end{abstract}

\maketitle

\section{Introduction and main results}
\label{sec:intro}

A generalized associahedron for a cluster algebra of finite type is a simple polytope whose face lattice is dual to the cluster complex.
Constructing such generalized associahedra has been a fruitful area of mathematical research  since the introduction of cluster algebras by S.~Fomin and A.~Zelevinsky in the early 2000s.
We refer to \cite{FZ2002, CFZ2002,HLT2008,Pilaud-Stump-2015,HPS2018} in this chronological order for some of the milestones and history.
This paper is a continuation of~\cite{BS-2018} and builds on recent results from~\cite{ABHY-2018, AHHL2020} and from~\cite{Padrol-Pilaud-2019}.

\medskip

The paper has three major results, two of which resolve conjectures by S.~Brodsky and the third author and, respectively, by D.~Speyer and L.~Williams.
\Cref{thm:indecomp_columns} gives a self-contained combinatorial construction of the rays of the type cone of the $g$-vector fan of a finite type cluster algebra with acyclic initial seed via subword complexes and brick polytopes.
Using this construction together with recent results from~\cite{ABHY-2018, AHHL2020} and~\cite{Padrol-Pilaud-2019}, \Cref{cor:NewtonFpoly} yields that this construction also describes the Newton polytopes of the $F$-polynomials of the cluster algebra.
This description was conjectured in~\cite[Conjecture~2.12]{BS-2018}.
The appearance of the $F$-polynomials is then as well used to derive \Cref{thm:tropplusvariety} showing that the totally positive part of the tropical cluster variety is, modulo its lineality space, linearly isomorphic to the $g$-vector fan.
As the $g$-vector fan is combinatorially isomorphic to the cluster complex, this affirmatively answers~\cite[Conjecture~8.1]{Speyer-Williams-2005} for finite type cluster algebras with principal coefficients and acyclic initial seed.

\medskip

In order to precisely state the results, let~$\Delta \subseteq \Phiplus \subseteq \Phipm \subseteq \Phi$ denote a finite crystallographic root system with fundamental weights~$\nabla$ and let $\mutmatrix$ denote an initial mutation matrix with principal coefficients for a cluster algebra $\Alg(\mutmatrix)$ of type~$\Phi$ with cluster variables $\set{u_\beta(\x,\y)}{\beta \in \Phipm}$ and cluster complex $\ClustComp(\mutmatrix)$ given by the set of compatible cluster variables.
The cluster variables have the form $u_\beta(\x,\y) = p(\x,\y) / \x^\beta$ with $\x = (x_1,\dots,x_n)$ and $\y = (y_1,\dots,y_n)$ for $p(\x,\y) \in \NN[\x,\y]$ and $\x^\beta = x_1^{\beta_1} \cdots x_n^{\beta_n}$ with $\beta = \beta_1\alpha_1+\dots+\beta_n\alpha_n$ expanded in the root basis $\Delta = \{\alpha_1,\dots,\alpha_n\}$.
Its $F$-polynomials are denoted by $\bigset{F_\beta = u_\beta(\onevar,\y)}{\beta \in \Phiplus}$ and its $g$-vector fan $\gFan(\mutmatrix)$ is given by the cones over compatible sets of $g$-vectors $g_\beta = g_1\omega_1+\dots+g_n\omega_n$ such that $u_\beta(\x,\zerovar) = x_1^{g_1}\cdots x_n^{g_n}$ expanded in the weight basis~$\nabla = \{\omega_1,\dots,\omega_n\}$.
It is well-known that the $g$-vector fan is combinatorially isomorphic to the cluster complex $S(\mutmatrix)$.
Let~$(W,\sref)$ denote the Coxeter system generated by $\sref=\set{s_\alpha}{\alpha \in \Delta}$ and let $c \in W$ be a standard Coxeter element given by the product of the reflections in $\sref$ in some order.
One may associate to this data an acyclic initial mutation matrix $\mutmatrix_\c$ with principal coefficients, and as well a brick polytope $\Assoc$ with normal fan given by the $g$-vector fan $\gFan(\mutmatrix_\c)$ of $\Alg(\mutmatrix_\c)$.
In particular, $\Assoc$ is a generalized associahedron for $\Alg(\mutmatrix_\c)$.
Brick polytopes for subword complexes come with natural Minkowski decompositions which in the present context may be written in the form
\begin{equation}
\label{eq:natural_mink_dec}
\Assoc = \sum_{\beta \in \Phiplus} \Assocbeta.
\end{equation}
The type cone $\TC{\gFan(\mutmatrix_\c)}$ of the $g$-vector fan is the space of all its polytopal realizations.
We thus have
\begin{equation}
\Assoc \in \bigTC{\gFan(\mutmatrix_\c)}. \label{eq:Associntypecone}
\end{equation}
While motivated by beautiful constructions in~\cite{ABHY-2018} and~\cite{Padrol-Pilaud-2019}, the following result is entirely self-contained and only uses properties of brick polytopes developed in~\cite{Pilaud-Stump-2015} and~\cite{BS-2018}.

\begin{theorem}
  \label{thm:indecomp_columns}
  For an acyclic initial mutation matrix $\mutmatrix_\c$ with principal coefficients, the type cone of the $g$-vector fan $\gFan(\mutmatrix_\c)$ is the open simplicial cone
  generated by the natural Minkowski summands of the brick polytope $\Assoc$,
  \[
  \bigTC{\gFan(\mutmatrix_\c)} \ = \ \cone\bigset{\Assocbeta}{\beta \in \Phiplus}.
  \]
\end{theorem}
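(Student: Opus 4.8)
The plan is to identify $\bigTCclosed{\gFan(\mutmatrix_\c)}$ with a simplicial cone by producing its rays explicitly and matching them, via a duality, with a distinguished family of walls of $\gFan(\mutmatrix_\c)$. Recall the general description of the type cone of a complete simplicial fan $\fan$ in $\RR^n$: a polytopal realization of $\fan$ is recorded by the vector of values of its support function on the rays of $\fan$, so $\bigTCclosed{\fan}$ becomes a polyhedral cone inside $\RR^{R}$, where $R$ is the set of rays; it is cut out by one linear inequality per wall (the \emph{wall-crossing inequalities}), its lineality space is the $n$-dimensional space of globally linear functions, and modulo this lineality it has dimension $|R|-n$. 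Moreover $\bigTCclosed{\fan}$ is full-dimensional in $\RR^R$ exactly because $\fan$ is polytopal. For $\fan=\gFan(\mutmatrix_\c)$ the rays are the almost positive roots $\Phipm=\Phiplus\cup(-\Delta)$, so $|R|-n=|\Phiplus|$, which is exactly the number of claimed generators. Hence it suffices to prove: (i) each $\Assocbeta$ defines a point of $\bigTCclosed{\gFan(\mutmatrix_\c)}$; (ii) the $\Assocbeta$, $\beta\in\Phiplus$, are linearly independent modulo the lineality space; and (iii) every element of $\bigTCclosed{\gFan(\mutmatrix_\c)}$ is a nonnegative combination of the $\Assocbeta$.

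Statement (i) is immediate: by \eqref{eq:natural_mink_dec} the polytope $\Assoc$, whose normal fan is $\gFan(\mutmatrix_\c)$ by \cite{Pilaud-Stump-2015}, is an honest Minkowski sum of the $\Assocbeta$, so each summand has normal fan coarsening $\gFan(\mutmatrix_\c)$ and therefore lies in the closed type cone. For (ii) I would compute the support-function values of each $\Assocbeta$ on the rays of $\gFan(\mutmatrix_\c)$ directly from the root and weight functions of the subword complex $\mathcal{SC}(cw_{\circ}(c))$ as set up in \cite{Pilaud-Stump-2015, BS-2018}; the resulting $|\Phipm|\times|\Phiplus|$ matrix can be shown to have full column rank, for instance by exhibiting a triangular pattern with respect to a linear extension of the root order, which the acyclic Coxeter element $c$ supplies. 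Since $\bigTCclosed{\gFan(\mutmatrix_\c)}$ is full-dimensional in $\RR^{\Phipm}$ with $n$-dimensional lineality, (ii) forces the $\Assocbeta$ together with the lineality to span the whole ambient space of the type cone; consequently any realization $h$ of $\gFan(\mutmatrix_\c)$ has a (modulo lineality) unique expansion $h=\sum_{\beta\in\Phiplus}\mu_\beta\,\Assocbeta$, and (iii) becomes the assertion that the coefficients $\mu_\beta$ are always nonnegative.

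The heart of the argument is therefore (iii), and the plan is to prove it by exhibiting, for every $\beta\in\Phiplus$, a single wall $\sigma_\beta$ of $\gFan(\mutmatrix_\c)$ whose wall-crossing inequality $\ell_{\sigma_\beta}\ge 0$ is \emph{dual} to the family of summands, namely $\ell_{\sigma_\beta}\!\big(\Asso_{\beta'}(\mutmatrix_\c)\big)=0$ for all $\beta'\neq\beta$ and $\ell_{\sigma_\beta}(\Assocbeta)>0$. Granting this, one gets $\mu_\beta=\ell_{\sigma_\beta}(h)\big/\ell_{\sigma_\beta}(\Assocbeta)\ge 0$, since $\ell_{\sigma_\beta}(h)\ge 0$ holds for every $h\in\bigTCclosed{\gFan(\mutmatrix_\c)}$ by definition of the type cone; this proves (iii), and passing to relative interiors yields the open simplicial cone asserted in the statement (all $\mu_\beta>0$). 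The vanishing $\ell_{\sigma_\beta}\!\big(\Asso_{\beta'}(\mutmatrix_\c)\big)=0$ says precisely that the two maximal cones adjacent to $\sigma_\beta$ are merged in the normal fan of $\Asso_{\beta'}(\mutmatrix_\c)$, i.e. that the edge of $\Asso_{\beta'}(\mutmatrix_\c)$ dual to $\sigma_\beta$ is degenerate. So what remains to be established is that among the flips of $\mathcal{SC}(cw_{\circ}(c))$ there is, for each positive root $\beta$, one flip that is ``seen'' by the summand $\Assocbeta$ and by none of the other summands.

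I expect this last point to be the main obstacle: locating the distinguished walls $\sigma_\beta$ and, above all, verifying the exclusivity claim $\ell_{\sigma_\beta}\!\big(\Asso_{\beta'}(\mutmatrix_\c)\big)=0$ for every $\beta'\neq\beta$. This is exactly where the acyclicity of $c$ should be used — it organizes the letters of $cw_{\circ}(c)$ and the behaviour of the root function along flips cleanly enough that each positive root can be attached to a unique ``responsible'' wall — and where the flip and brick-vector combinatorics of \cite{Pilaud-Stump-2015, BS-2018} must be pushed further than their stated form. Everything else in the argument (the dimension count, the rank computation, and the final division) is linear algebra together with bookkeeping of the Minkowski summands already available there.
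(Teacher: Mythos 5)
Your plan is structurally sound and, in outline, runs parallel to the paper's argument: both hinge on producing, for each positive root $\beta$, a flip of $\clusterComplex$ that is ``seen'' by the summand $\Assocbeta$ and by no other summand. But the proposal defers exactly this point (your walls $\sigma_\beta$ and the exclusivity claim $\ell_{\sigma_\beta}\big(\Asso_{\beta'}(\mutmatrix_\c)\big)=0$ for $\beta'\neq\beta$), and this is not a routine extension of \cite{Pilaud-Stump-2015, BS-2018} --- it is the paper's key new lemma. Concretely, the paper introduces the canonical long flip sequence $\greedy=I_0\prec I_1\prec\dots\prec I_N=\antigreedy$ of source mutations and proves (\Cref{lem:difference_weight_sequence}) that for each position $j\in\{n+1,\dots,n+N\}$ there is a unique flip $I_\ell\prec I_{\ell+1}$ in this sequence involving $j$, and that at this flip the weight function changes \emph{only} in coordinate $j$, namely $\Weight{I_{\ell+1}}{j}=\Weight{I_\ell}{j}-\Root{I_\ell}{i}$ and $\Weight{I_{\ell+1}}{k}=\Weight{I_\ell}{k}$ for $k\neq j$. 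Its proof uses that, up to commutations, the facets $I_\ell$ are windows of $n$ consecutive letters of $\cw{c}$ --- this is precisely where the Coxeter-sorting structure (your ``acyclicity'') enters. Without this, both your step (iii) and your step (ii) (the ``triangular pattern'' rank argument is only asserted) remain unproved, so the proposal has a genuine gap at its heart, which you candidly acknowledge.

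For comparison, once that lemma is in hand the paper packages the endgame more economically than your wall-functional bookkeeping. The distinguished flip gives $\brickVector_{\Phiplus\setminus\{\gamma\}}(I_\ell)=\brickVector_{\Phiplus\setminus\{\gamma\}}(I_{\ell+1})$, so by \Cref{cor:verts_of_subsum} and the second part of \Cref{lem:weights_under_flips} the polytope $\Asso_{\Phiplus\setminus\{\gamma\}}(\mutmatrix_\c)$ has strictly fewer vertices than $\Assoc$; hence it is a proper coarsening and lies on the boundary of $\TCclosed{\gFan(\mutmatrix_\c)}$, while $\Assoc=\sum_{\beta\in\Phiplus}\Assocbeta$ is interior by \eqref{eq:Associntypecone}. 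An elementary convexity lemma (\Cref{lem:simplicial_cone}: if $x=x_1+\dots+x_m$ is interior to a full-dimensional cone $C\subset\RR^m$ and every $x-x_i$ lies on its boundary, then $C=\cone\{x_1,\dots,x_m\}$) then finishes the proof with $m=N$ and $x_\beta=\Assocbeta$; in particular it delivers your linear-independence step (ii) for free, with no explicit identification of wall-crossing functionals and no rank computation. So your route would work once the distinguished-flip statement is proved, but that statement is the essential missing ingredient rather than a technicality to be ``pushed further''.
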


\begin{remark}
  This theorem and its proof are combinatorial and do not use any representation theory.
  The definition of a generalized associahedron $\Assoc$ in~\cite{HLT2008,Pilaud-Stump-2015} extends verbatim to the noncrystallographic finite types $I_2(m)$ for $m \notin \{3,4,6\}$ and $H_3,H_4$.
  The theorem also holds for noncrystallographic types when replacing the left-hand side by the type cone of weak Minkowski summands of $\Assoc$ even though mutations of cluster variables, $g$-vectors and $F$-polynomials in these types do not behave combinatorially nicely~\cite{Lam2018}.
\end{remark}

Combining~\cite[Theorem~3]{ABHY-2018} (simply-laced types) and~\cite[Theorem~6.1]{AHHL2020} (multiply-laced types) with~\cite[Theorem~2.26]{Padrol-Pilaud-2019}, one obtains that the rays of the type cone of the $g$-vector fan are also equal to the Newton polytopes of the $F$-polynomials,
\begin{equation}
  \bigTC{\gFan(\mutmatrix_\c)} \ = \ \cone\bigset{\Newton{F_\beta}}{\beta \in \Phiplus},\tag{$\star$} \label{eq:newtonF}
\end{equation}
where the exponent vectors are written in the root basis~$\Delta$, and in particular that
\begin{equation}
  \sum_{\beta \in \Phiplus} \Newton{F_\beta} \ \in \ \bigTC{\gFan(\mutmatrix_\c)} \tag{$\star\star$} \label{eq:newtonF2}
\end{equation}
is a generalized associahedron for $\Alg(\mutmatrix_\c)$.
According to~\cite{Padrol-Pilaud-2019}, H.~Thomas announced that a future version of~\cite{ABHY-2018} will generalize~\eqref{eq:newtonF} also to cyclic finite types.
In this case,~\eqref{eq:newtonF2} was conjectured by S.~Brodsky and the third author in~\cite[Conjecture~2.22]{BS-2018}.
Combining this with \Cref{thm:indecomp_columns} and known properties of $F$-polynomials, we obtain the second main result describing Newton polytopes of $F$-polynomials for acyclic initial seeds in terms of subword complexes.

\begin{theorem}[{\cite[Conjecture~2.12]{BS-2018}}]
  \label{cor:NewtonFpoly}
  Let $\mutmatrix_\c$ be an acyclic initial mutation matrix with principal coefficients.
  For any positive root $\beta \in \Phi^+$, we have 
  \[
  \Newton{F_\beta} \ = \ \Assocbeta.
  \]
\end{theorem}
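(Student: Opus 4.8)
The plan is to play the two available descriptions of the type cone against each other. By \Cref{thm:indecomp_columns}, $\bigTC{\gFan(\mutmatrix_\c)}$ is an open simplicial cone generated by the natural Minkowski summands $\Assocbeta$, $\beta\in\Phiplus$, so its closure has exactly $|\Phiplus|$ extreme rays. By the known identity~\eqref{eq:newtonF}, the same cone is also generated by the Newton polytopes $\Newton{F_\beta}$, $\beta\in\Phiplus$; since $|\Phiplus|$ vectors spanning a simplicial cone of dimension $|\Phiplus|$ must, up to a permutation, be positive multiples of its extreme rays, the family $\{\Newton{F_\beta}\}$ is a complete, irredundant system of ray generators as well. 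As the extreme rays of a simplicial cone are unique, there are a bijection $\sigma\colon\Phiplus\to\Phiplus$ and positive reals $\lambda_\beta$ with $\Newton{F_\beta}$ and $\lambda_\beta\,\Asso_{\sigma(\beta)}(\mutmatrix_\c)$ spanning the same ray, that is, coinciding up to translation. It then remains to show that $\sigma$ is the identity, that $\lambda_\beta=1$, and that no translation occurs.

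For this I would use two elementary features shared by both families. First, $\Newton{F_\beta}$ and $\Assocbeta$ are lattice polytopes, each having $\zerovar$ as a vertex, and this vertex is ``the same'' on both sides: on the cluster side it is the unique monomial of $F_\beta$ of minimal degree (as $F_\beta(\zerovar)=1$), hence the vertex whose normal cone contains the initial-seed cone of $\gFan(\mutmatrix_\c)$; on the brick side it is the contribution of the greedy facet $\greedy$ of $\SCcw$, which under the isomorphism between $\SCcw$, the $g$-vector fan and $\ClustComp(\mutmatrix_\c)$ corresponds to the initial cluster (cf.~\cite{Pilaud-Stump-2015,BS-2018}). Hence the homothety relating $\Newton{F_\beta}$ and $\Asso_{\sigma(\beta)}(\mutmatrix_\c)$ can be normalized to fix $\zerovar$, so it is the dilation $x\mapsto\lambda_\beta x$, and no translation is present. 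Second, each of the two polytopes has a distinguished vertex ``opposite'' to $\zerovar$ which, written in the root basis $\Delta$, equals the positive root indexing it: $\beta$ for $\Newton{F_\beta}$, because the maximal monomial of $F_\beta$ is $\y^\beta$ (equivalently, $\beta$ is the $d$-vector of $u_\beta$), and $\gamma$ for $\Asso_\gamma(\mutmatrix_\c)$ by the analogous property of brick-polytope summands. Since the dilation matches normal fans, and hence these distinguished vertices, we get $\beta=\lambda_\beta\,\sigma(\beta)$; as distinct positive roots of a reduced crystallographic system are never positive scalar multiples of one another, this forces $\sigma(\beta)=\beta$ and $\lambda_\beta=1$. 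Therefore the homothety is the identity and $\Newton{F_\beta}=\Assocbeta$.

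The cone-theoretic step of the first paragraph is soft: it identifies the two families only up to relabeling and rescaling. The real work --- and the main obstacle --- lies in the bookkeeping behind the second paragraph, \ie\ in verifying that the two normalizations genuinely refer to the same data. On the cluster side the required facts ($F_\beta(\zerovar)=1$, the shape of the maximal monomial, $\beta$ being the $d$-vector of $u_\beta$) are classical; on the brick side one must track them through the explicit dictionary of~\cite{Pilaud-Stump-2015,BS-2018} relating cluster variables, $g$-vectors, facets of $\SCcw$ and the positions of its defining word, so as to be certain that the summand labelled $\Assocbeta$ is attached precisely to $u_\beta$ and carries $\zerovar$ and $\beta$ in the asserted positions. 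Granting this, the argument above closes the proof.
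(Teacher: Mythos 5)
Your proposal follows essentially the same route as the paper's proof: \Cref{thm:indecomp_columns} together with \eqref{eq:newtonF} identifies the two families of ray generators of the simplicial type cone up to relabelling, dilation and translation, and the matching is then rigidified by the two distinguished vertices $\zerovar$ and $\beta$, supplied on the $F$-polynomial side by \Cref{lem:highest_and_lowest_exponent}. One correction to your bookkeeping, however: in the paper's normalization it is the \emph{antigreedy} facet that contributes the vertex $\zerovar$ of $\Assocbeta$ (the brick polytope is translated precisely so that $\brickVector(\antigreedy)=\zerovar$), while the greedy facet---which does correspond to the initial cluster---contributes $\Weight{\greedy}{\beta}-\Weight{\antigreedy}{\beta}=\beta$; accordingly, the initial-seed cone of $\gFan(\mutmatrix_\c)$ selects the vertex $\beta$ of $\Newton{F_\beta}$ (its top monomial $\y^\beta$), whereas the vertex $\zerovar$ is selected by the opposite cone, the one attached to the antigreedy cluster. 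You make this swap consistently on both sides, so your conclusion that the $\zerovar$-vertices (and likewise the $\beta$-vertices) correspond under the homothety survives once both statements are corrected, but as literally written both justifying claims are false. The brick-side facts you defer as ``the main obstacle'' are exactly what the paper supplies: \Cref{cor:verts_of_subsum} and \Cref{cor:verts_of_subsum_2} give that $\brickVector_{\{\beta\}}(\greedy)=\beta$ and $\brickVector_{\{\beta\}}(\antigreedy)=\zerovar$ are vertices of $\Assocbeta$ (the identity $\Weight{\greedy}{\beta}-\Weight{\antigreedy}{\beta}=\beta$ being \cite[Lemma~3.7]{BS-2018}), and \Cref{lem:weights_under_flips} shows they are the componentwise maximal and minimal vertices. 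With these in hand the normal-cone discussion is not even needed: a homothety between two polytopes each having $\zerovar$ as unique minimal vertex and $\beta$, respectively $\sigma(\beta)$, as unique maximal vertex must have trivial translation and satisfy $\lambda_\beta\,\sigma(\beta)=\beta$, forcing $\sigma(\beta)=\beta$ and $\lambda_\beta=1$, which is exactly how the paper concludes.
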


In \cite{Speyer-Williams-2005} the authors associate to the cluster algebra $\Alg(\mutmatrix)$ a polyhedral fan $\Tropplus \spec \Alg(\mutmatrix)$ by tropicalizing the positive part of the affine variety $\spec \Alg(\mutmatrix)$.
Using~\eqref{eq:newtonF2}, we finally derive the following theorem\footnote{In response to a first preprint, Thomas Lam informed us that a more general version of this theorem also follows from~\cite[Theorems~4.1 \& 4.2]{AHHL2020} which implies the first part of \cite[Conjecture~8.1]{Speyer-Williams-2005}.}.

\begin{theorem}
  \label{thm:tropplusvariety}
  For acyclic initial mutation matrix $\mutmatrix_\c$ with principal coefficients, the totally positive part of the tropical variety associated to the cluster algebra $\Alg(\mutmatrix_\c)$ is, modulo its lineality space~$\mathcal L$, linearly isomorphic to the $g$-vector fan,
  \[
  \Tropplus\spec{\Alg(\mutmatrix_\c)}\big/ \mathcal L \ \cong \ \gFan(\mutmatrix_\c).
  \]
\end{theorem}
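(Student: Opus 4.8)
The plan is to reduce \Cref{thm:tropplusvariety} to the description of the type cone obtained in \Cref{thm:indecomp_columns} and \Cref{cor:NewtonFpoly} together with the identification~\eqref{eq:newtonF2} of $\sum_{\beta \in \Phiplus}\Newton{F_\beta}$ as a generalized associahedron. The starting point is the explicit description of $\Tropplus\spec\Alg(\mutmatrix_\c)$ coming from \cite{Speyer-Williams-2005}: since $\Alg(\mutmatrix_\c)$ with principal coefficients is generated (as an algebra, after inverting the coefficient variables) by the cluster variables $u_\beta(\x,\y)$, and each such variable is a Laurent monomial times an $F$-polynomial, the tropicalization of the positive part of $\spec\Alg(\mutmatrix_\c)$ is cut out, in the relevant coordinates, by the tropical polynomials $\trop(F_\beta)$ for $\beta \in \Phiplus$, together with the monomial relations coming from the $g$-vectors. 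Concretely, the totally positive tropical variety is the common refinement of the normal fans (equivalently, the domains of linearity) of the piecewise-linear functions $x \mapsto \trop(F_\beta)(x)$, intersected with the subspace imposed by the exchange relations; modulo the lineality space $\mathcal L$ coming from the coefficient directions this is a pointed fan on $\RR^n$.

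The key steps, in order, are as follows. First I would make precise the claim that $\Tropplus\spec\Alg(\mutmatrix_\c)\big/\mathcal L$ equals the normal fan of the polytope $P := \sum_{\beta\in\Phiplus}\Newton{F_\beta}$; this is where the structure of principal coefficients is used, so that the only inequalities surviving the quotient by $\mathcal L$ are exactly those recording, for each exchange relation $u_\beta u_{\beta'} = M_1 + M_2$, which of the two monomials attains the tropical minimum — and the domains of this piecewise-linearity are precisely the maximal cones of the common refinement of the $\Newton{F_\beta}$'s, i.e.\ of the normal fan of the Minkowski sum $P$. Second, I would invoke~\eqref{eq:newtonF2}, which says that $P \in \bigTC{\gFan(\mutmatrix_\c)}$, so that the normal fan of $P$ is exactly $\gFan(\mutmatrix_\c)$. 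Chaining the two identifications gives
\[
\Tropplus\spec\Alg(\mutmatrix_\c)\big/\mathcal L \ = \ \text{(normal fan of } P) \ = \ \gFan(\mutmatrix_\c),
\]
and since both sides naturally live in an $n$-dimensional space, the resulting combinatorial isomorphism is in fact linear, which is the assertion. (One can additionally remark that \Cref{cor:NewtonFpoly} even pins down the individual Minkowski summands as brick polytopes $\Assocbeta$, but this refinement is not needed for the statement.)

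The main obstacle I expect is the first step: carefully matching the ad hoc tropical-variety definition of \cite{Speyer-Williams-2005} — which involves tropicalizing an ideal presentation of $\spec\Alg(\mutmatrix_\c)$ — with the clean statement ``normal fan of $\sum_\beta\Newton{F_\beta}$''. This requires knowing that a generating set of the ideal of relations among the cluster variables tropicalizes ``nicely'' (no unexpected cancellation, the tropical prevariety equals the tropical variety on the positive part), and that after quotienting by the lineality space the coefficient variables $\y$ decouple so only the $F$-polynomial part contributes. The positivity of the Laurent expansions (all $F_\beta \in \NN[\y]$, with the constant term and the $\y^\beta$-term both present) is what makes the positive tropicalization behave like an honest Minkowski sum of Newton polytopes rather than a coarser prevariety, and spelling this out is the technical heart of the argument; everything after it is formal, given \eqref{eq:newtonF2}.
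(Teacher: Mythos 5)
Your overall strategy --- reduce everything to the statement that the positive tropicalization is, modulo lineality, the common refinement of the normal fans of the $\Newton{F_\beta}$, and then conclude via~\eqref{eq:newtonF2} (or \Cref{cor:NewtonFpoly}) that this refinement is $\gFan(\mutmatrix_\c)$ --- is the same as the paper's, and the final step is fine. But the step you yourself single out as ``the technical heart'' is exactly the part you do not supply, and the route you sketch for it would not work as stated. You propose to tropicalize a generating set of the ideal (you even switch to the exchange relations $u_\beta u_{\beta'}=M_1+M_2$, which is not a presentation anyone has set up here) and then assert that on the positive part the prevariety cut out by these generators equals the positive tropical variety. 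Since $\Tropplus(\ideal)$ is by definition an intersection over \emph{all} $f\in\ideal$, this is precisely the nontrivial claim, and no argument is offered. The paper avoids this issue entirely: it uses the presentation $\ideal_{\mutmatrix}=\langle x_\beta\x^\beta-p_\beta(\x,\y)\rangle$, whose special form gives a subtraction-free parametrization $\Psi$ (the graph of $(\x,\y)\mapsto(p_\beta(\x,\y)/\x^\beta)_\beta$), and then invokes \cite[Proposition~2.6]{Speyer-Williams-2005} (restated as \Cref{cor:param_tropplus}) to conclude that $\Trop\Psi$ is a piecewise linear parametrization of $\Tropplus(\ideal_{\mutmatrix})$. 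This is also not merely a convenience: the fan structure on $\Tropplus$ in the theorem is the Speyer--Williams one, defined by pushing forward the domains of linearity of $\Trop\Psi$, so the parametrization is built into the very object being compared with $\gFan(\mutmatrix_\c)$.

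The second ingredient you wave at but do not justify is the ``decoupling'' of the initial cluster variables. The functions being tropicalized are $p_\beta(\x,\y)/\x^\beta$, not $F_\beta(\y)$, so one must relate the normal fan of $\Newton{p_\beta}\subset\RR^{X_\Delta\sqcup Y}$ to that of $\Newton{F_\beta}\subset\RR^{Y}$. The paper does this with \cite[Corollary~6.3]{FZ:CA4}: the $\x$-exponents of $p_\beta$ are an affine function of the $\y$-exponents, so $\Newton{p_\beta}=T(\Newton{F_\beta})$ for an affine map $T$ independent of $\beta$, whence each fan of linearity $\fan_\beta$ is \emph{linearly} isomorphic to the normal fan of $\Newton{F_\beta}$; this is also what makes the isomorphism in the theorem linear rather than just combinatorial, a point your ``both sides live in an $n$-dimensional space'' remark does not establish. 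In short: right skeleton and right use of~\eqref{eq:newtonF2}, but the two concrete tools that make the first half of the argument work (the Speyer--Williams parametrization result applied to the graph map, and the Fomin--Zelevinsky separation-of-additions corollary) are missing, and your substitute for them is an unproven prevariety-equals-variety claim.
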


As the $g$-vector fan is combinatorially isomorphic to the cluster complex, this affirmatively answers a conjecture by D.~Speyer and L.~Williams in this situation.

\begin{corollary}[{\cite[Conjecture~8.1]{Speyer-Williams-2005}}]
  In the situation of \Cref{thm:tropplusvariety}, the cluster complex $\ClustComp(\mutmatrix)$ is combinatorially isomorphic to the polyhedral fan~$\Tropplus\spec{\Alg(\mutmatrix_\c)}$ .
\end{corollary}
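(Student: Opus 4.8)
The plan is to deduce the statement purely formally from \Cref{thm:tropplusvariety} together with two facts that are already on record. First I would use that quotienting a polyhedral fan by its lineality space does not change its combinatorial structure: since $\mathcal L$ is contained in every cone of the fan, the assignment $\sigma \mapsto \sigma/\mathcal L$ is an inclusion-preserving bijection from the cones of $\Tropplus\spec{\Alg(\mutmatrix_\c)}$ onto the cones of $\Tropplus\spec{\Alg(\mutmatrix_\c)}/\mathcal L$. Hence these two fans have isomorphic face posets, that is, are combinatorially isomorphic. By \Cref{thm:tropplusvariety} the quotient fan is moreover linearly --- in particular combinatorially --- isomorphic to the $g$-vector fan $\gFan(\mutmatrix_\c)$.

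It then remains to move from the $g$-vector fan to the cluster complex. Here I would invoke the well-known fact recalled in the introduction: for a finite type cluster algebra the $g$-vector fan is a complete simplicial fan whose poset of cones, ordered by inclusion, is isomorphic to the face poset of the cluster complex, the rays being the $g$-vectors of the cluster variables and a set of rays spanning a cone precisely when the corresponding cluster variables are compatible. This gives a combinatorial isomorphism $\gFan(\mutmatrix_\c) \cong \ClustComp(\mutmatrix_\c)$, and composing it with the isomorphisms of the previous paragraph yields $\ClustComp(\mutmatrix_\c) \cong \Tropplus\spec{\Alg(\mutmatrix_\c)}$. If one wants the statement with a general initial exchange matrix $\mutmatrix$ of the same finite type, one finally appeals to the classical fact (see \cite{FZ2002}) that the combinatorial type of the cluster complex of a finite type cluster algebra is independent of the chosen initial seed, so that $\ClustComp(\mutmatrix) \cong \ClustComp(\mutmatrix_\c)$.

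I do not expect any serious obstacle in this corollary: the genuine work has already been done in \Cref{thm:indecomp_columns}, \Cref{cor:NewtonFpoly} and especially \Cref{thm:tropplusvariety}, and what is left is a concatenation of isomorphisms. The only points that need a little care are bookkeeping ones --- namely that ``combinatorially isomorphic to a polyhedral fan'' is to be read as an isomorphism of face posets of the associated (complete, simplicial) fans, and that passing to the quotient by $\mathcal L$ as well as changing the initial seed both preserve this face poset.
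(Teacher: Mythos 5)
Your proposal is correct and follows essentially the same route as the paper, which deduces the corollary directly from \Cref{thm:tropplusvariety} together with the standard facts that passing to the quotient by the lineality space preserves the face poset and that the $g$-vector fan is combinatorially isomorphic to the cluster complex. Your extra remark on the independence of the cluster complex from the choice of initial seed is a harmless bookkeeping point that the paper leaves implicit.
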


\subsection{Acknowledgements}

The third author would like to thank Thomas Lam, Arnau Padrol, Markus Reineke, Raman Sanyal and Hugh Thomas for valuable discussions concerning various parts of this paper.

\section{A natural Minkowski decomposition of generalized associahedra}
\label{sec:defintionsresults}

We follow the notions from~\cite{BS-2018} and refer to Section~2 therein for details.

\subsection{Generalized associahedra for acyclic type}

Let $(W,\sref)$ be a finite type Coxeter system of rank~$n$ and let $\Delta \subseteq \Phiplus \subseteq \Phipm \subseteq \Phi \subseteq V$ be a finite root system for $(W,\sref)$ inside an Euclidean vector space~$V$, with simple roots $\Delta = \set{\alpha_s}{s \in \sref}$, positive roots~$\Phiplus$ and almost positive roots $\Phipm = \Phiplus \sqcup -\Delta$.
Denote by $N = |\Phiplus|$ the number of positive roots and $n+N = |\Phipm|$.
Let~$C = (a_{st})_{s,t \in \sref}$ denote the corresponding \Dfn{Cartan matrix} given by $s(\alpha_t) = \alpha_t - a_{st} \alpha_s$ and set $\nabla = \set{\omega_s}{s \in \sref} \subseteq V$ to be the fundamental weights given by
\begin{align*}
  \alpha_s = \sum_{t \in \sref}a_{ts}\omega_t.
\end{align*}
One then has $s(\omega_t) = \omega_t - \delta_{s=t}\alpha_s$ for $s,t \in \sref$.
In all below considerations we consider~$V \cong \RR^\Delta$ to have fixed basis~$\Delta$, though in the examples we simultaneously consider the vector space with standard basis and standard inner product.

We consider a fixed Coxeter element~$c \in W$ and a reduced word $\c = \s_1\cdots\s_n$ for~$c$.
To avoid double indices we write $\alpha_i$ for $\alpha_{s_i}$ and $\omega_i = \omega_{s_i}$.
The initial mutation matrix $\mutmatrix_\c = (m_{ij})$ is then obtained from the Cartan matrix by
\[
  m_{ij} = \begin{cases}
             0      &\text{ if } i=j \\
             -a_{st} &\text{ if } s=s_i \text{ appears before } t=s_j \text{ in the reduced word } \c \\
              a_{st} &\text{ if } s=s_i \text{ appears after  } t=s_j \text{ in the reduced word } \c
           \end{cases},
\]
for $1 \leq i,j \leq n$, together with an identity matrix below.

\begin{figure}[t]
  \setlength{\extrarowheight}{3pt}
  \setlength{\tabcolsep}{5pt}
  \begin{tabular}{ l | c | c | l }
    cluster variable & $d$-vector & $g$-vector & $F$-polynomial \\
    \hline
    \hline
    \multicolumn{4}{l}{Example $A_3$}\\
    $x_1$  & $\mii{1}00_\Delta$ & $100_\nabla = \tfrac{1}{4} 321_\Delta$ & \\
    \hline
    $x_2$  & $0\mii{1}0_\Delta$ & $010_\nabla = \tfrac{1}{4} 121_\Delta$ & \\
    \hline
    $x_3$  & $00\mii{1}_\Delta$ & $001_\nabla = \tfrac{1}{4} 123_\Delta$ & \\
    \hline
    $x_4 = \frac{x_2 + y_1}{x_1}$ & $100_\Delta$ & $\mii{1}10_\nabla = \tfrac{1}{4} \mii{2}00_\Delta$ & $F_{100_\Delta} = y_1 + 1$ \\
    \hline
    $x_5 = \frac{x_1y_1y_2 + x_3y_1 + x_2x_3}{x_1x_2}$ & $110_\Delta$ & $\mii{1}01_\nabla = \tfrac{1}{4} \mii{2}02_\Delta$ & $F_{110_\Delta} = y_1y_2 + y_1 + 1$ \\
    \hline
    $x_6 = \frac{x_1x_2y_1y_2y_3 + x_1y_1y_2 + x_3y_1 + x_2x_3}{x_1x_2x_3}$ & $111_\Delta$ & $\mii{1}00_\nabla = \tfrac{1}{4} \mii{321}_\Delta$ & $F_{111_\Delta} = y_1y_2y_3 + y_1y_2 + y_1 + 1$ \\
    \hline
    $x_7 = \frac{x_1y_2 + x_3}{x_2}$ & $010_\Delta$ & $0\mii{1}1_\nabla = \tfrac{1}{4} 002_\Delta$ & $F_{010_\Delta} = y_2 + 1$ \\
    \hline
    $x_8 = \frac{x_1x_2y_2y_3 + x_1y_2 + x_3}{x_2x_3}$ & $011_\Delta$ & $0\mii{1}0_\nabla = \tfrac{1}{4} \mii{121}_\Delta$ & $F_{011_\Delta} = y_2y_3 + y_2 + 1$ \\
    \hline
    $x_9 = \frac{x_2y_3 + 1}{x_3}$ & $001_\Delta$ & $00\mii{1}_\nabla = \tfrac{1}{4} \mii{123}_\Delta$ & $F_{001_\Delta} = y_3 + 1$ \\
    \hline
    \hline
    \multicolumn{4}{l}{Example $B_2$}\\
    $x_1$ & $\mii{1}0_\Delta$ & $10_\nabla = \hspace{6pt} 11_\Delta$ & \\
    \hline
    $x_2$ & $0\mii{1}_\Delta$ & $01_\nabla = \tfrac{1}{2} 12_\Delta$ & \\
    \hline
    $x_3 = \frac{x_2^2 + y_1}{x_1}$ & $10_\Delta$ & $\mii{1}2_\nabla = \hspace{6pt} 01_\Delta$ & $F_{10_\Delta} = y_1 + 1$ \\
    \hline
    $x_4 = \frac{x_1y_1y_2 + y_1 + x_2^2}{x_1x_2}$ & $11_\Delta$ & $\mii{1}1_\nabla = \tfrac{1}{2} \mii{1}0_\Delta$ & $F_{11_\Delta} = y_1y_2 + y_1 + 1$ \\
    \hline
    $x_5 = \frac{x_1^2y_1y_2^2 + 2x_1y_1y_2 + y_1 + x_2^2}{x_1x_2^2}$ & $12_\Delta$ & $\mii{1}0_\nabla = \hspace{6pt} \mii{11}_\Delta$ & $F_{12_\Delta} = y_1y_2^2 + 2y_1y_2 + y_1 + 1$ \\
    \hline
    $x_6 = \frac{x_1y_2 + 1}{x_2}$ & $01_\Delta$ & $0\mii{1}_\nabla = \tfrac{1}{2} \mii{12}_\Delta$ & $F_{01_\Delta} = y_2 + 1$
  \end{tabular}
  \caption{The cluster variables with its $d$-vectors, $g$-vectors and $F$-polynomials for the initial mutation matrices in \Cref{ex:A3mutmatrix_2,,ex:B2mutmatrix_2}.}
  \label{fig:examples}
\end{figure}

\begin{example}[$A_3$-example]
  \label{ex:A3mutmatrix_2}
  Take $W = \Sym_4$ the symmetric group with adjacent transpositions as simple generators
  \[
    \sref = \big\{s_1 = (1,2),\ s_2 = (2,3),\ s_3 = (3,4) \big\}
  \]
  acting on $V = \set{(\lambda_1,\lambda_2,\lambda_3,\lambda_4) \in \RR^4}{\lambda_1+\lambda_2+\lambda_3+\lambda_4 = 0}\cong \RR^4 / \RR (1111)$, equipped with the standard inner product, by permuting the standard basis.
  We choose
  \[
    \Delta = \big\{ \alpha_1 = 1\mii{1}00,~ \alpha_2 = 01\mii{1}0,~ \alpha_3 = 001\mii{1} \big\}
  \]
  as a basis of~$V$.
  Here and below we write shorthand $\mii \lambda := -\lambda$ for scalars~$\lambda$.
  We may express an element in~$V$ in round brackets such as $(1,0,\mii 1,0) = (1,1,0)_\Delta = \alpha_1+\alpha_2$ where the first expression is the genuine element in~$V \subset \RR^4$ and the second expression is in the chosen basis $\Delta$ in the given order.
  We obtain
  \begin{align*}
  \Phiplus &= \big\{  1\mii{1}00\hspace{2px},\ 
  10\mii{1}0\hspace{2px},\ 
  100\mii{1}\hspace{2px},\ 
  01\mii{1}0\hspace{2px},\ 
  010\mii{1}\hspace{2px},\ 
  001\mii{1}\hspace{2px}  \big\} \\
  &= \big\{  100_\Delta,\ 
  110_\Delta,\ 
  111_\Delta,\ 
  010_\Delta,\ 
  011_\Delta,\ 
  001_\Delta \big\},
  \end{align*}
  and finally $\Phipm = \Phiplus \sqcup -\Delta$ and $\Phi = \Phiplus \sqcup -\Phiplus$.
  In this case, $n = |\sref| = 3$ and $N=|\Phiplus| = 6$.
  The corresponding Cartan matrix is
  \[
  C =
  \begin{pmatrix}
  2       & \mii{1} & 0       \\
  \mii{1} & 2       & \mii{1} \\
  0       & \mii{1} & 2
  \end{pmatrix}
  \]
  and the fundamental weights are
  \[
  \nabla = \big\{ \omega_1 = 1000 = \tfrac{1}{4}(321_\Delta),\quad
  \omega_2 = 1100 = \tfrac{1}{4}(121_\Delta),\quad
  \omega_3 = 1110 = \tfrac{1}{4}(123_\Delta) \big\}.
  \]
  Fix the Coxeter element~$c = (1,2,3,4) \in \Sym_4$ to be the long cycle with reduced word $c = \s_1\s_2\s_3$.
  \Cref{fig:examples} shows cluster variables, $d$- and $g$-vectors and $F$-polynomials for the initial mutation matrix
  \[
  \mutmatrix_\c =
    \begin{pmatrix}
      0        & 1       & 0 \\
      \mii{1}  & 0       & 1 \\
      0        & \mii{1} & 0 \\
      1        & 0       & 0 \\
      0        & 1       & 0 \\
      0        & 0       & 1 
    \end{pmatrix}\ .
  \]
\end{example}

\begin{example}[$B_2$-example]
\label{ex:B2mutmatrix_2}
  Take $W = \Sym_2^B$ the group of signed permutations with simple generators
  \[
    \sref = \big\{ s_1 = (1,2),~ s_2 = (2,\mii{2}) \big\}
  \]
  where $s_1$ is the usual adjacent transposition interchanging the standard basis elements $e_1$ and $e_2$, and where $s_2$ interchanges $e_2$ and $-e_2$.
  $W$ acts on $V = \RR^2$, equipped with the standard inner product.
  We choose
  \[
    \Delta = \big\{ \alpha_1 = 2\mii{2},~ \alpha_2 = 02 \big\}
  \]
  as a basis of $V$.
  With notation as above, we obtain
  \begin{align*}
    \Phiplus &= \big\{ 2\mii{2}\hspace{7px},\ 
    20\hspace{7px},\ 
    22\hspace{7px},\ 
    02\hspace{7px}  \big\} \\
    &= \big\{ 10_\Delta,\ 
    11_\Delta,\ 
    12_\Delta,\ 
    01_\Delta \big\},
  \end{align*}
  and finally $\Phipm = \Phiplus \sqcup -\Delta$ and $\Phi = \Phiplus \sqcup -\Phiplus$.
  In this case, $n = |\sref| = 2$ and $N=|\Phiplus|=4$.
  The corresponding Cartan matrix is
  \[
  C =
    \begin{pmatrix}
      2       & \mii{1} \\
      \mii{2} & 2
    \end{pmatrix}
  \]
  and the fundamental weights are
  \[
    \nabla = \big\{ \omega_1 = 20 = 11_\Delta,\quad \omega_2 = 11 = \tfrac{1}{2}(12_\Delta) \big\}.
  \]
  Fix the Coxeter element~$c = (1,2,\mii 1, \mii 2) \in \Sym_2^B$ to be the long cycle with reduced word $\c = \s_1\s_2$.
  \Cref{fig:examples} shows cluster variables, $d$- and $g$-vectors and $F$-polynomials for the initial mutation matrix
  \begin{equation*}
  \mutmatrix_\c 
  \ = \ 
  \begin{pmatrix}  
  0 & 1 \\
  \mii{2}  & 0 \\ 
  1 & 0 \\
  0 & 1 
  \end{pmatrix}.
  \end{equation*}
  For later reference, \Cref{fig:B2gfan} shows the $g$-vector fan in the weight basis and the Newton polytopes of the $F$-polynomials in the root basis in this case.

  \tikzstyle{poly} = [fill=blue!25, line width=1.2pt, opacity=0.75, draw=blue!40]
  \tikzstyle{segment} = [draw=blue!40, line width=1.2pt, line cap=round]
  \tikzstyle{vertex} = [fill=blue!50, draw=blue!50, line width=1.2pt]
  \tikzstyle{fan} = [fill=red!20, opacity=0.5]
  \tikzstyle{ray} = [draw=red!50, line width=1.4pt, line cap=round]
  \tikzstyle{coords} = [draw=black!60,->,>=latex,line width=0.7pt, line cap=round]
  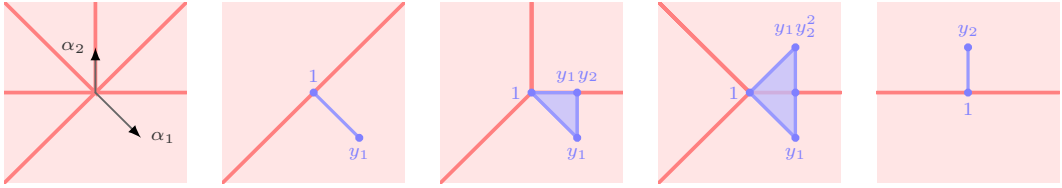
\begin{figure}
  	\centering
  	\begin{tikzpicture}[scale=0.6]
  	\clip (-2,-2) rectangle (2,2);
  	\path[fan] (-2,-2) -- (2,-2) -- (2,2) -- (-2,2) -- cycle; 
  	\path[ray] (2,0) -- (0,0) -- (2,2);
  	\path[ray] (0,2) -- (0,0) -- (-2,2);
  	\path[ray] (-2,0) -- (0,0) -- (-2,-2);    
  	\path[coords] (0,0) -- (1,-1) node[right] {\scriptsize \color{black!80} $\alpha_1$};;
  	\path[coords] (0,0) -- (0,1) node[left] {\scriptsize \color{black!80} $\alpha_2$};
  	\end{tikzpicture}
  	\quad
  	\begin{tikzpicture}[scale=0.6]
  	\clip (-2,-2) rectangle (2,2);
  	\path[fan] (-2,-2) -- (2,-2) -- (2,2) -- (-2,2) -- cycle; 
  	\path[ray] (-2,-2) -- (2,2);
	\path[segment] (0,0) -- (1,-1);
 	\draw[vertex] (1,-1) node[below] {\scriptsize \color{blue!50} $y_1$} circle (1.5pt);
 	\draw[vertex] (0,0) node[above] {\scriptsize \color{blue!50} $1$} circle (1.5pt);
  	\end{tikzpicture}
  	\quad
  	\begin{tikzpicture}[scale=0.6]
  	\clip (-2,-2) rectangle (2,2);
  	\path[fan] (-2,-2) -- (2,-2) -- (2,2) -- (-2,2) -- cycle; 
  	\path[ray] (-2,-2) -- (0,0) -- (0,2) -- (0,0) -- (2,0);
  	\path[poly] (0,0) -- (1,-1) -- (1,0)  - -cycle;
  	\path[segment] (0,0) -- (1,-1) -- (1,0)  - -cycle;
  	\draw[vertex] (1,-1) node[below] {\scriptsize \color{blue!50} $y_1$} circle (1.5pt);
  	\draw[vertex] (1,0) node[above] {\scriptsize \color{blue!50} $y_1y_2$} circle (1.5pt);
  	\draw[vertex] (0,0) node[left] {\scriptsize \color{blue!50} $1$} circle (1.5pt);
  	\end{tikzpicture}
  	\quad
  	\begin{tikzpicture}[scale=0.6]
  	\clip (-2,-2) rectangle (2,2);
  	\path[fan] (-2,-2) -- (2,-2) -- (2,2) -- (-2,2) -- cycle; 
  	\path[ray] (-2,-2) -- (0,0) -- (-2,2) -- (0,0) -- (2,0);
  	\path[poly] (0,0) -- (1,-1) -- (1,1)  - -cycle;
  	\path[segment] (0,0) -- (1,-1) -- (1,1)  - -cycle;
  	\draw[vertex] (1,-1) node[below] {\scriptsize \color{blue!50} $y_1$} circle (1.5pt);
  	\draw[vertex] (1,1) node[above] {\scriptsize \color{blue!50} $y_1y_2^2$} circle (1.5pt);
  	\draw[vertex] (0,0) node[left] {\scriptsize \color{blue!50} $1$} circle (1.5pt);
  	\draw[vertex] (1,0) node[left] {} circle (1.5pt);
  	\end{tikzpicture}
  	\quad
  	\begin{tikzpicture}[scale=0.6]
  	\clip (-2,-2) rectangle (2,2);
  	\path[fan] (-2,-2) -- (2,-2) -- (2,2) -- (-2,2) -- cycle; 
  	\path[ray] (-2,0) -- (2,0);
  	\path[segment] (0,0) -- (0,1);
  	\draw[vertex] (0,1) node[above] {\scriptsize \color{blue!50} $y_2$} circle (1.5pt);
  	\draw[vertex] (0,0) node[below] {\scriptsize \color{blue!50} $1$} circle (1.5pt);
  	\end{tikzpicture}
  	\caption{The $g$-vector fan of type $B_2$ from \Cref{ex:B2mutmatrix_2} (left) is the common refinement of the normal fans of the Newton polytopes  (blue) of the $F$-polynomials. }
  	\label{fig:B2gfan}
  \end{figure}
\end{example}

\bigskip

Let $\wo\in W$ be the unique longest element in weak order.
For a given word $\Q = \q_1\cdots\q_m$ in the simple system $\sref$ define the \Dfn{(spherical) subword complex} $\subwordComplex(\Q)$ as the simplicial complex of (positions of) letters in~$\Q$ whose complement contains a reduced word of~$\wo$. A more general version of these complexes were introduced by A.~Knutson and E.~Miller in~\cite{KM2004}.
By definition, the facets of $\subwordComplex(\Q)$ are subwords of~$\Q$ whose complements are reduced words for~$\wo$.
We consider facets as sorted lists of indices, written in set notation.
Moreover define~$\greedy$ and $\antigreedy$ to be the lexicographically first and last facets, respectively, and call them \Dfn{greedy facet} and \Dfn{antigreedy facet}.
The following notions were introduced and studied for general subword complexes in~\cite{CLS-14, Pilaud-Stump-2015}.
For $\Q = \q_1 \cdots \q_m$ and any facet~$I \in \subwordComplex(\Q)$ associate a \Dfn{root function} $\Root{I}{\cdot} : [m] \to \Phi = W(\Delta) \subseteq V$ and a \Dfn{weight function}~$\Weight{I}{\cdot} : [m] \to W(\nabla) \subseteq V$ defined by
\[
\Root{I}{k} \ = \ \wordprod{\Q}{[k-1] \ssm I}(\alpha_{q_k}) \quad \text{and} \quad \Weight{I}{k} \ = \ \wordprod{\Q}{[k-1] \ssm I}(\omega_{q_k}),
\]
where~$\wordprod{\Q}{X}$ denotes the product of the simple reflections~$q_x \in \Q$, for~$x \in X\subseteq [m]$, in the order given by~$\Q$.
It is well known, see~\cite[Theorem~3.7]{KM2004}, that $\subwordComplex(\Q)$ is a simplicial sphere, thus for a given facet $I$ and index $i\in I$ there exists a unique adjacent facet $J$ with $I\setminus i = J\setminus j$.
We call the transition from $I$ to $J$ the \Dfn{flip} of~$i$ in~$I$ and if $i<j$ such a flip is called \Dfn{increasing}, in which case we write $I\prec J$.
This yields a poset structure on the set of facets of $\subwordComplex(\Q)$ with $\greedy$ as unique minimal element and $\antigreedy$ as unique maximal element. 

\medskip

Following~\cite{CLS-14}, the (abstract) cluster complex $S(\mutmatrix_\c)$ can be seen as a subword complex as follows.
Denote by $\cwo{c}$ the \Dfn{Coxeter-sorting word} (or \Dfn{$\c$-sorting word}) of $\wo$, i.e., the lexicographically first subword of $\c^N$ that is a reduced word for $\wo$.
The notion of Coxeter-sorting words was introduced by N.~Reading in \cite{R2007} and is an essential ingredient in the combinatorial descriptions of finite type cluster algebras and, in particular, in the description of cluster complexes in terms of subword complexes.
In this setting we get the cluster complex as
\begin{equation}
S(\mutmatrix_\c) \cong \clusterComplex. \label{eq:Smutiso}
\end{equation}

\begin{example}[$A_3$-example]
\label{ex:A3mutmatrix}
  For the Coxeter element $c = s_1s_2s_3$ with fixed reduced word $\c = \s_1\s_2\s_3$ we identify the letter~$\s_i$ with its index~$i$.
  The $\c$-sorting word of~$\wo$ then is $\cwo{\c} = 123121$ and we obtain $\c \cwo{c} = 123123121$ for the subword complex $\clusterComplex$.
  The values of the root function are given by
  \setlength{\extrarowheight}{3pt}
  \setlength{\tabcolsep}{5pt}
  \[\resizebox{12cm}{3.75cm}{
    \begin{tabular}{l|| c|c|c| c|c|c| c|c|c}
    \multirow{2}{*}{$I$}
    & \multicolumn{9}{c}{$\Root{I}{\cdot}$}  
    \\
    & 1 & 2 & 3 & 4 & 5 & 6 & 7 & 8 & 9
    \\\hline
    123 $=\greedy$      & 1\mi{1}00 & 01\mi{1}0 & 001\mi{1} 
    & 1\mi{1}00 & 10\mi{1}0 & 100\mi{1} 
    & 01\mi{1}0 & 010\mi{1} & 001\mi{1}
    \\\hline
    129                 & 1\mi{1}00 & 01\mi{1}0 & 001\mi{1}
    & 1\mi{1}00 & 100\mi{1} & 10\mi{1}0
    & 010\mi{1} & 01\mi{1}0 & 00\mi{1}1
    \\\hline
    137                 & 1\mi{1}00 & 01\mi{1}0 & 010\mi{1}
    & 10\mi{1}0 & 1\mi{1}00 & 100\mi{1}
    & 0\mi{1}10 & 010\mi{1} & 001\mi{1}
    \\\hline
    178                 & 1\mi{1}00 & 01\mi{1}0 & 010\mi{1}
    & 10\mi{1}0 & 100\mi{1} & 1\mi{1}00
    & 001\mi{1} & 0\mi{1}01 & 001\mi{1}
    \\\hline
    189                 & 1\mi{1}00 & 01\mi{1}0 & 010\mi{1}
    & 10\mi{1}0 & 100\mi{1} & 1\mi{1}00
    & 001\mi{1} & 01\mi{1}0 & 001\mi{1}
    \\\hline
    234                 & 1\mi{1}00 & 10\mi{1}0 & 001\mi{1}
    & \mi{1}100 & 10\mi{1}0 & 100\mi{1}
    & 01\mi{1}0 & 010\mi{1} & 001\mi{1}
    \\\hline
    249                 & 1\mi{1}00 & 10\mi{1}0 & 001\mi{1}
    & \mi{1}100 & 100\mi{1} & 10\mi{1}0
    & 010\mi{1} & 01\mi{1}0 & 001\mi{1}
    \\\hline
    345                 & 1\mi{1}00 & 10\mi{1}0 & 100\mi{1}
    & 01\mi{1}0 & \mi{1}010 & 100\mi{1}
    & 01\mi{1}0 & 010\mi{1} & 001\mi{1}
    \\\hline
    357                 & 1\mi{1}00 & 10\mi{1}0 & 100\mi{1}
    & 01\mi{1}0 & \mi{1}100 & 100\mi{1}
    & 0\mi{1}10 & 010\mi{1} & 001\mi{1}
    \\\hline
    456                 & 1\mi{1}00 & 10\mi{1}0 & 100\mi{1}
    & 01\mi{1}0 & 001\mi{1} & \mi{1}001
    & 01\mi{1}0 & 010\mi{1} & 001\mi{1}
    \\\hline
    469                 & 1\mi{1}00 & 10\mi{1}0 & 100\mi{1}
    & 01\mi{1}0 & 001\mi{1} & \mi{1}010
    & 010\mi{1} & 01\mi{1}0 & 00\mi{1}1
    \\\hline
    567                 & 1\mi{1}00 & 10\mi{1}0 & 100\mi{1}
    & 01\mi{1}0 & 010\mi{1} & \mi{1}001
    & 0\mi{1}10 & 010\mi{1} & 001\mi{1}
    \\\hline
    678                 & 1\mi{1}00 & 10\mi{1}0 & 100\mi{1}
    & 01\mi{1}0 & 010\mi{1} & \mi{1}100
    & 001\mi{1} & 0\mi{1}01 & 001\mi{1}
    \\\hline
    689 $= \antigreedy$ & 1\mi{1}00 & 10\mi{1}0 & 100\mi{1}
    & 01\mi{1}0 & 010\mi{1} & \mi{1}100
    & 001\mi{1} & 0\mi{1}10 & 00\mi{1}1
    \end{tabular}
  }\]
  and the values of the weight function are given by
  \[\resizebox{12cm}{3.75cm}{
    \begin{tabular}{l|| c|c|c| c|c|c| c|c|c}
    \multirow{2}{*}{$I$}
    & \multicolumn{9}{c}{$\Weight{I}{\cdot}$}  
    \\
    & 1 & 2 & 3 & 4 & 5 & 6 & 7 & 8 & 9
    \\\hline
    123 $= \greedy$     & 1000 & 1100 & 1110 & 1000 & 1100 & 1110 & 0100 & 0110 & 0010 
    \\\hline
    129                 & 1000 & 1100 & 1110 & 1000 & 1100 & 1101 & 0100 & 0101 & 0001 
    \\\hline
    137                 & 1000 & 1100 & 1110 & 1000 & 1010 & 1110 & 0010 & 0110 & 0010 
    \\\hline
    178                 & 1000 & 1100 & 1110 & 1000 & 1010 & 1011 & 0010 & 0011 & 0010 
    \\\hline
    189                 & 1000 & 1100 & 1110 & 1000 & 1010 & 1011 & 0010 & 0011 & 0001 
    \\\hline
    234                 & 1000 & 1100 & 1110 & 0100 & 1100 & 1110 & 0100 & 0110 & 0010 
    \\\hline
    249                 & 1000 & 1100 & 1110 & 0100 & 1100 & 1101 & 0100 & 0101 & 0001 
    \\\hline
    345                 & 1000 & 1100 & 1110 & 0100 & 0110 & 1110 & 0100 & 0110 & 0010 
    \\\hline
    357                 & 1000 & 1100 & 1110 & 0100 & 0110 & 1110 & 0010 & 0110 & 0010 
    \\\hline
    456                 & 1000 & 1100 & 1110 & 0100 & 0110 & 0111 & 0100 & 0110 & 0010 
    \\\hline
    469                 & 1000 & 1100 & 1110 & 0100 & 0110 & 0111 & 0100 & 0101 & 0001 
    \\\hline
    567                 & 1000 & 1100 & 1110 & 0100 & 0110 & 0111 & 0010 & 0110 & 0010 
    \\\hline
    678                 & 1000 & 1100 & 1110 & 0100 & 0110 & 0111 & 0010 & 0011 & 0010 
    \\\hline
    689 $= \antigreedy$ & 1000 & 1100 & 1110 & 0100 & 0110 & 0111 & 0010 & 0011 & 0001 
    \end{tabular}
  }\]
\end{example}

\begin{example}[$B_2$-example]
  \label{ex:B2mutmatrix}
  For the Coxeter element $c = s_1s_2$ with reduced word $\c = \s_1\s_2$ we identify the letters $s_i$ with its index~$i$. The $\c$-sorting word of $\wo$ then is $\cwo{c} = 1212$ and we obtain 
  $\c \cwo{c} = 121212$ for the subword complex $\clusterComplex$. 
  The values of the root and weight function are given by
  \setlength{\extrarowheight}{3pt}
  \setlength{\tabcolsep}{6pt}
  \[
  \begin{tabular}{l|| c|c| c|c| c|c|| c|c| c|c| c|c}
  \multirow{2}{*}{$I$}
  & \multicolumn{6}{c||}{$\Root{I}{\cdot}$} 
  & \multicolumn{6}{c}{$\Weight{I}{\cdot}$} 
  \\
  & 1 & 2 & 3 & 4 & 5 & 6 
  & 1 & 2 & 3 & 4 & 5 & 6
  \\\hline
  12 $= \greedy$      & 2\mi{2} & 02      & 2\mi{2} & 20      & 22            & 02
  & 20      & 11      & 20      & 11      & 02            & \mi{1}1
  \\\hline
  16                  & 2\mi{2} & 02      & 22      & 20      & 2\mi{2}       & 0\mi{2}
  & 20      & 11      & 20      & 1\mi{1} & 0\mi{2}       & \mi{1}\mi{1}
  \\\hline
  23                  & 2\mi{2} & 20      & \mi{2}2 & 20      & 22            & 02
  & 20      & 11      & 02      & 11      & 02            & \mi{1}1
  \\\hline
  34                  & 2\mi{2} & 20      & 22      & \mi{2}0 & 22            & 02
  & 20      & 11      & 02      & \mi{1}1 & 02            & \mi{1}1
  \\\hline
  45                  & 2\mi{2} & 20      & 22      & 02      & \mi{2}\mi{2}  & 02
  & 20      & 11      & 02      & \mi{1}1 & \mi{2}0       & \mi{1}1
  \\\hline
  56 $= \antigreedy$  & 2\mi{2} & 20      & 22      & 02      & \mi{2}2       & 0\mi{2}
  & 20      & 11      & 02      & \mi{1}1 & \mi{2}0       & \mi{1}\mi{1}
  \end{tabular}
  \]
\end{example}

It was developed in~\cite{Pilaud-Stump-2015} how one may obtain a generalized associahedron using subword complexes and brick polytopes.
Define the \Dfn{brick vector} of the facet~$I$ of $\clusterComplex$ as
\begin{align}
\brickVector(I) \ = \ \sum\limits_{k = 1}^{N} \big{(} \Weight{I}{n+k} - \Weight{\antigreedy}{n+k} \big{)} \in V, \label{eq:brickvector}
\end{align}
and the \Dfn{brick polytope} $\Assoc$ in~$V$ as the convex hull of all brick vectors of $\clusterComplex$, that is, 
\[
\Assoc \ = \ \conv \bigset{\brickVector(I)}{I \text{ facet of } \clusterComplex}.
\]
It was shown in \cite[Corollary 6.10]{Pilaud-Stump-2015} that $\Assoc$ is a generalized associahedron.

\medskip

As explained in~\cite{BS-2018}, we consider $g$-vectors to live in the weight space.
This is, we embed a $g$-vector $(g_1,\dots,g_n)$ into the vector space~$V$ as $g_1 \omega_1 +\dots + g_n \omega_n \in V$.
With this convention, we have the following previously known proposition.

\begin{proposition}
  The normal fan of $\Assoc$ is the $g$-vector fan.
  This is,
  \[
    \Assoc \in \bigTC{\gFan(\mutmatrix_\c)}.
  \]
\end{proposition}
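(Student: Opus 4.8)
The plan is to obtain this (previously known) statement by assembling two ingredients: the brick polytope machinery of~\cite{Pilaud-Stump-2015}, and the identification — recalled in~\cite{BS-2018} — of the resulting fan with the $g$-vector fan, under the convention fixed just above that $g$-vectors are recorded in the weight basis~$\nabla$. The structural property that makes the machinery apply is that $\clusterComplex$ is \emph{root independent}: for every facet $I$ the roots $\set{\Root{I}{i}}{i \in I}$ form a basis of $V$. This is part of the subword-complex description of cluster complexes from~\cite{CLS-14, Pilaud-Stump-2015}: by~\eqref{eq:Smutiso} the complex $\clusterComplex$ is isomorphic to the cluster complex of the finite type cluster algebra $\Alg(\mutmatrix_\c)$, so every facet has exactly $n$ elements and corresponds to a cluster, and the root configuration of a facet spans $V$.

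Granting root independence, I would invoke the brick polytope results of~\cite{Pilaud-Stump-2015}, culminating in~\cite[Corollary~6.10]{Pilaud-Stump-2015}: the brick polytope $\Assoc$ is a simple $n$-dimensional polytope whose vertices are exactly the brick vectors $\brickVector(I)$ indexed by the facets $I$ of $\clusterComplex$, and the normal cone of the vertex $\brickVector(I)$ is the \emph{brick cone}
\[
C(I) \ = \ \cone\bigset{\Weight{I}{i}}{i \in I},
\]
equivalently the cone carved out by the halfspaces orthogonal to the roots $\Root{I}{i}$, $i \in I$, with signs governed by the flip order on facets. Hence the normal fan of $\Assoc$ is the complete simplicial \emph{brick fan} $\set{C(I)}{I \text{ facet of } \clusterComplex}$, whose rays are spanned by the weight vectors $\Weight{I}{k}$; one also checks, as in~\cite{Pilaud-Stump-2015, BS-2018}, that $\Weight{I}{k}$ depends only on the position $k$ and not on the chosen facet $I \ni k$.

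It then remains to match the brick fan with the $g$-vector fan $\gFan(\mutmatrix_\c)$. Under the bijection between the $n+N$ positions of $\cw{c}$ and the almost positive roots $\Phipm$ — the first $n$ positions (the $\c$-block) corresponding to the initial cluster variables $x_1,\dots,x_n$ and the last $N$ positions (the $\cwo{c}$-block) to the non-initial ones — the weight vector $\Weight{I}{k}$ attached to position $k$ is precisely the $g$-vector of the corresponding cluster variable written in the weight basis. This is the content of the brick-vector computations in~\cite{BS-2018} and underlies the $c$-Cambrian fan realization of~\cite{Pilaud-Stump-2015} together with the Reading--Speyer identification of the $c$-Cambrian fan with the $g$-vector fan. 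Consequently each brick cone $C(I)$ is spanned by the $g$-vectors of the cluster indexed by $I$, so the brick fan coincides with $\gFan(\mutmatrix_\c)$, and therefore $\Assoc \in \bigTC{\gFan(\mutmatrix_\c)}$ by definition of the type cone. The only genuine difficulty is the bookkeeping required to line up the several coexisting sign conventions for $g$-vectors ($\mutmatrix$ versus $\mutmatrix^{\mathsf T}$, $c$ versus $c^{-1}$, root basis versus weight basis); as this has already been settled in~\cite{BS-2018}, the argument ultimately reduces to citing the relevant statements there and in~\cite[Corollary~6.10]{Pilaud-Stump-2015}.
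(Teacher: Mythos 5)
Your argument is correct and follows essentially the same route as the paper: it combines the brick polytope results of~\cite{Pilaud-Stump-2015} (the paper cites Proposition~6.6 for the facet normals at a vertex $\brickVector(I)$ being $\set{\Weight{I}{i}}{i \in I}$, where you cite Corollary~6.10 and the brick cones) with the identification of these weight vectors with the $g$-vectors of the corresponding cluster from~\cite{BS-2018} (Corollary~2.10 there). The extra remarks on root independence and the Cambrian-fan background are fine but not needed beyond these two citations.
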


\begin{proof}
  It is shown in \cite[Proposition 6.6]{Pilaud-Stump-2015} that the facet normals of all facets of $\Assoc$ containing a given brick vector $\brickVector(I)$ for some facet~$I$ of $\clusterComplex$ are $\set{\Weight{I}{i}}{i \in I}$.
  With the above embedding of the $g$-vectors into~$V$, it was then shown in \cite[Corollary 2.10]{BS-2018} that this set coincides with the set of $g$-vectors inside the cluster of $\Alg(\mutmatrix_\c)$ corresponding to~$I$ inside $\clusterComplex$ under the isomorphism in~\eqref{eq:Smutiso} which is also explained in more detail in \Cref{rem:Smutiso}.
\end{proof}

The given definition of the brick polytope differs from the definition given in~\cite{Pilaud-Stump-2015} by a translation and is chosen so that the brick vector $\brickVector(\antigreedy)$ of the antigreedy facet is the origin.
This translation corresponds to the shifted weight function as used in~\cite[Conjecture~2.12]{BS-2018}.
Furthermore, we have for any facet~$I$ of $\clusterComplex$ that $\Weight{I}{k} = \Weight{\antigreedy}{k}$ for all $1 \leq k \leq n$.
This clarifies why we do not consider the first~$n$ weight vectors in the summation in~\eqref{eq:brickvector}.

\medskip

The root function of the greedy facet provides a bijection between the set of positive roots and the positions $n+1,\ldots,n+N$. That is, $\set{\Root{\greedy}{n+k}}{1\leq k \leq N} = \Phiplus$.
As observed in~\cite[Lemma~3.7]{BS-2018}, we moreover have $\Root{\greedy}{n+k} = \Weight{\greedy}{n+k} - \Weight{\antigreedy}{n+k}$ for all $1 \leq k \leq N$.
For $\beta=\Root{\greedy}{n+k} \in\Phiplus$ and a facet~$I$, we sometimes write $\Weight{I}{\beta} := \Weight{I}{n+k}$ for simplicity, and define
\[
\Assocbeta = \conv\bigset{\Weight{I}{\beta} - \Weight{\antigreedy}{\beta}}{I \text{ facet of } \clusterComplex}\ .
\]

\begin{remark}
\label{rem:Smutiso}
  This identification of the positions $n+1,\dots,n+N$ and $\Phiplus$ is the same as the isomorphism in~\eqref{eq:Smutiso} in the following sense.
  As known since~\cite{FZ2003}, sending a cluster variable $u_\beta(\x,\y)$ to its $d$-vector~$\beta$ is a bijection between cluster variables and almost positive roots $\Phipm$.
  Identifying the positions $1,\dots,n$ with the simple negative roots $-\alpha_1,\dots,-\alpha_n$ in this order and the above identification between positions $n+1,\dots,n+N$ and $\Phiplus$ is a bijection between cluster variables and positions $1,\dots,n,n+1,\dots,n+N$ and this bijection induces the bijection used in~\eqref{eq:Smutiso}.
  In particular, the polytope $\Assocbeta$ naturally correspond to the cluster variable~$u_\beta$.
  This correspondence turns out to be a structural correspondence as discussed in \Cref{sec:fpolys} where we show that $\Assocbeta = \Newton{F_\beta} = \Newton{u_\beta(\onevar,\y)}$ is the Newton polytope of the $F$-polynomial associated to this cluster variable.
\end{remark}

\begin{example}[$A_3$-example]\label{ex:shifted_weights_a3}
  We display the shifted weight function for positions $n+1, \dots, n+N$ and the brick 
  vector in the following shifted weight table:
  \setlength{\extrarowheight}{3pt}
  \setlength{\tabcolsep}{3pt}
  \[\resizebox{\textwidth}{4cm}{
    \begin{tabular}{l|| c|c| >{\columncolor{gray!20}}c| 
      c|c|c|| c}
    \multirow{2}{*}{$I$}
    & \multicolumn{6}{c||}{$\Weight{I}{\cdot} - \Weight{\antigreedy}{\cdot}$}
    & \multirow{2}{*}{$\brickVector(I)$}
    \\
    ~ & 4 & 5 & 6 & 7 & 8 & 9 & ~
    \\\hline
    123                   & 1\mi{1}00       = \D{100} 
    & 10\mi{1}0       = \D{110}
    & 100\mi{1}       = \D{111} 
    & 01\mi{1}0       = \D{010}
    & 010\mi{1}       = \D{011} 
    & 001\mi{1}       = \D{001} 
    & 31\mi{1}\mi{3}  = \D{343}
    \\\hline
    129                   & 1\mi{1}00       = \D{100} 
    & 10\mi{1}0       = \D{110}
    & 10\mi{1}0       = \D{110}
    & 01\mi{1}0       = \D{010}
    & 01\mi{1}0       = \D{010}
    & 0000            = \D{000}
    & 31\mi{4}0       = \D{340}
    \\\hline
    137                   & 1\mi{1}00       = \D{100}
    & 1\mi{1}00       = \D{100}
    & 100\mi{1}       = \D{111}
    & 0000            = \D{000}
    & 010\mi{1}       = \D{011}
    & 001\mi{1}       = \D{001}
    & 3\mi{1}1\mi{3}  = \D{323}
    \\\hline
    178                   & 1\mi{1}00       = \D{100}
    & 1\mi{1}00       = \D{100}
    & 1\mi{1}00       = \D{100}
    & 0000            = \D{000}
    & 0000            = \D{000}
    & 001\mi{1}       = \D{001}
    & 3\mi{3}1\mi{1}  = \D{301}
    \\\hline
    189                   & 1\mi{1}00       = \D{100}
    & 1\mi{1}00       = \D{100}
    & 1\mi{1}00       = \D{100}
    & 0000            = \D{000}
    & 0000            = \D{000}
    & 0000            = \D{000}
    & 3\mi{3}00       = \D{300}
    \\\hline
    234                   & 0000            = \D{000}
    & 10\mi{1}0       = \D{110}
    & 100\mi{1}       = \D{111}
    & 01\mi{1}0       = \D{010}
    & 010\mi{1}       = \D{011}
    & 001\mi{1}       = \D{001}
    & 22\mi{1}\mi{3}  = \D{243}
    \\\hline
    249                   & 0000            = \D{000}
    & 10\mi{1}0       = \D{110}
    & 10\mi{1}0       = \D{110}
    & 01\mi{1}0       = \D{010}
    & 01\mi{1}0       = \D{010}
    & 0000            = \D{000}
    & 22\mi{4}0       = \D{240}
    \\\hline
    345                   & 0000            = \D{000}
    & 0000            = \D{000}
    & 100\mi{1}       = \D{111}
    & 01\mi{1}0       = \D{010}
    & 010\mi{1}       = \D{011}
    & 001\mi{1}       = \D{001}
    & 120\mi{3}       = \D{133}
    \\\hline
    357                   & 0000            = \D{000}
    & 0000            = \D{000}
    & 100\mi{1}       = \D{111}
    & 0000            = \D{000}
    & 010\mi{1}       = \D{011}
    & 001\mi{1}       = \D{001}
    & 111\mi{3}       = \D{123}
    \\\hline
    456                   & 0000            = \D{000}
    & 0000            = \D{000}
    & 0000            = \D{000}
    & 01\mi{1}0       = \D{010}
    & 010\mi{1}       = \D{011}
    & 001\mi{1}       = \D{001}
    & 020\mi{2}       = \D{022}
    \\\hline
    469                   & 0000            = \D{000}
    & 0000            = \D{000}
    & 0000            = \D{000}
    & 01\mi{1}0       = \D{010}
    & 01\mi{1}0       = \D{010}
    & 0000            = \D{000}
    & 02\mi{2}0       = \D{020}
    \\\hline
    567                   & 0000            = \D{000}
    & 0000            = \D{000}
    & 0000            = \D{000}
    & 0000            = \D{000}
    & 010\mi{1}       = \D{011}
    & 001\mi{1}       = \D{001}
    & 011\mi{2}       = \D{012}
    \\\hline
    678                   & 0000            = \D{000}
    & 0000            = \D{000}
    & 0000            = \D{000}
    & 0000            = \D{000}
    & 0000            = \D{000}
    & 001\mi{1}       = \D{001}
    & 001\mi{1}       = \D{001}
    \\\hline
    689                   & 0000            = \D{000}
    & 0000            = \D{000}
    & 0000            = \D{000}
    & 0000            = \D{000}
    & 0000            = \D{000}
    & 0000            = \D{000}
    & 0000            = \D{000}
    \end{tabular}
  }\]
  Especially, the gray marked column corresponds to the polytope 
  \[
  \Asso_{111_\Delta}(\mutmatrix_\c) = \conv
  \big\{ 111_\Delta,~ 110_\Delta,~ 100_\Delta,~ 000_\Delta \big\}.
  \]
\end{example}

\begin{example}[$B_2$-example]\label{ex:shifted_weights_b2}
  We display the shifted weight function for positions $n+1, \dots, n+N$ and the brick 
  vector in the following shifted weight table:
  \setlength{\extrarowheight}{3pt}
  \setlength{\tabcolsep}{6pt}
  \[
  \begin{tabular}{l|| c|c| >{\columncolor{gray!20}}
    c|c|| c}
  \multirow{2}{*}{$I$}
  & \multicolumn{4}{c||}{$\Weight{I}{\cdot} - \Weight{\antigreedy}{\cdot}$}
  & \multirow{2}{*}{$\brickVector(I)$}
  \\
  ~ & 3 & 4 & 5 & 6 & ~
  \\\hline
  12                  & 2\mi{2} = \D{10}        & 20      = \D{11} 
  & 22      = \D{12}        & 02      = \D{01}
  & 62      = \D{34}
  \\\hline
  16                  & 2\mi{2} = \D{10}        & 2\mi{2} = \D{10}  
  & 2\mi{2} = \D{10}        & 00      = \D{00}
  & 6\mi{6} = \D{30}
  \\\hline
  23                  & 00      = \D{00}        & 20      = \D{11} 
  & 22      = \D{12}        & 02      = \D{01}
  & 44      = \D{24}
  \\\hline
  34                  & 00      = \D{00}        & 00      = \D{00} 
  & 22      = \D{12}        & 02      = \D{01}
  & 24      = \D{13}
  \\\hline
  45                  & 00      = \D{00}        & 00      = \D{00} 
  & 00      = \D{00}        & 02      = \D{01}
  & 02      = \D{01}
  \\\hline
  56                  & 00      = \D{00}        & 00      = \D{00} 
  & 00      = \D{00}        & 00      = \D{00} 
  & 00      = \D{00}
  \end{tabular}
  \]
  The gray marked column corresponds to the polytope 
  \[
  \Asso_{12_\Delta}(\mutmatrix_\c) = \conv
  \big\{ 12_\Delta,~ 10_\Delta,~ 00_\Delta \big\},
  \]
  and the brick polytope $\Assoc$ can be seen in \Cref{fig:B2brickpoly}.
  
  \tikzstyle{poly} = [fill=blue!25, line width=1.2pt, opacity=0.75, draw=blue!40]
  \tikzstyle{segment} = [draw=blue!40, line width=1.2pt, line cap=round]
  \tikzstyle{vertex} = [fill=blue!50, draw=blue!50, line width=1.2pt]
  \tikzstyle{fan} = [fill=red!20, opacity=0.5]
  \tikzstyle{ray} = [draw=red!50, line width=1.4pt, line cap=round]
  \tikzstyle{coords} = [draw=black!60,->,>=latex,line width=0.7pt, line cap=round]
  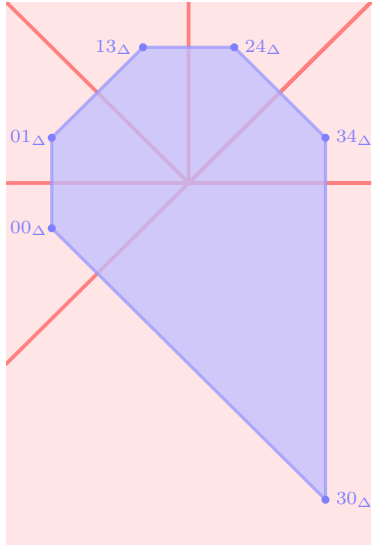
\begin{figure}
    \centering
    \begin{tikzpicture}[scale=0.6]
    \clip (-1,-7) rectangle (7,5);
    \path[fan] (-1,-7) -- (-1,5) -- (7,5) -- (7,-7) -- cycle; 
    \path[ray] (-1,1) -- (7,1) -- (3,1) -- (3,7) -- (-1,5) -- (3,1) -- (7,5) -- (-1,-3);
    \path[poly] (0,0) -- (0,2) -- (2,4) -- (4,4) -- (6,2) -- (6,-6)  - -cycle;
    
    \draw[vertex] (0,0)  circle (1.5pt);
    \node at (-0.5,0) {\scriptsize \color{blue!50} $00_\Delta$};
    \draw[vertex] (0,2)  circle (1.5pt);
    \node at (-0.5,2) {\scriptsize \color{blue!50} $01_\Delta$};
    \draw[vertex] (2,4) node[left] {\scriptsize \color{blue!50} $13_\Delta$} circle (1.5pt);
    \draw[vertex] (4,4) node[right] {\scriptsize \color{blue!50} $24_\Delta$} circle (1.5pt);
    \draw[vertex] (6,2) node[right] {\scriptsize \color{blue!50} $34_\Delta$} circle (1.5pt);
    \draw[vertex] (6,-6) node[right] {\scriptsize \color{blue!50} $30_\Delta$} circle (1.5pt);

    \end{tikzpicture}
    
    \caption{The brick polytope $\Assoc$ of type $B_2$ from \Cref{ex:shifted_weights_b2} and its outer normal fan, centered at $\tfrac{1}{2}(34_\Delta)$. }
    \label{fig:B2brickpoly}
  \end{figure}
\end{example}

We next collect several properties of the polytopes $\Assocbeta$.
We first recall the following crucial lemma.

\begin{lemma}[{\cite[Lemmas 4.4~\&~4.5]{Pilaud-Stump-2015}}]
  \label{lem:weights_under_flips}
  Let $I\setminus i = J\setminus j$ with $i<j$ be two facets of $\clusterComplex$. For any $k\in \{1,\hdots,n+N\}$ we have
  \[
  \Weight{J}{k} = \Weight{I}{k} - \lambda \Root{I}{i} \text{ for some } \lambda\in\mathbb{Z}_{\geq 0} .
  \]
  Moreover for the brick vectors we obtain 
  \[
  \brickVector(J) = \brickVector(I) - \lambda \Root{I}{i} \text{ for some }   \lambda\in\mathbb{Z}_{> 0}.
  \]
\end{lemma}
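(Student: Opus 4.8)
The plan is to track how the flip $I\to J$ changes the prefix products $\wordprod{\Q}{[k-1]\ssm I}$ underlying the root and weight functions, where $\Q=q_1\cdots q_{n+N}$ is the word $\cw{c}$. Write $F=I\cap J$, so that $I$ and $J$ differ only in that $i\in I\ssm F$ and $j\in J\ssm F$, and put $u=\wordprod{\Q}{[i-1]\ssm F}$, so $\Root{I}{i}=u(\alpha_{q_i})$. I would then split according to where $k$ sits relative to $i<j$, using only which of $i,j$ lies in $[k-1]$. If $k\le i$, then $[k-1]\ssm I=[k-1]\ssm J$, so $\Weight{I}{k}=\Weight{J}{k}$ and $\Root{I}{k}=\Root{J}{k}$. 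If $i<k\le j$, then $[k-1]\ssm J$ is obtained from $[k-1]\ssm I$ by inserting the index $i$, so with $w=\wordprod{\Q}{(i,k-1]\ssm F}$ one gets $\wordprod{\Q}{[k-1]\ssm I}=uw$ and $\wordprod{\Q}{[k-1]\ssm J}=uq_iw=(uq_iu^{-1})(uw)=s_{\Root{I}{i}}\wordprod{\Q}{[k-1]\ssm I}$, where $s_\beta$ denotes the orthogonal reflection in a root $\beta$; hence $\Weight{J}{k}=s_{\Root{I}{i}}\big(\Weight{I}{k}\big)=\Weight{I}{k}-\lambda\Root{I}{i}$ with $\lambda=\dotprod{\Weight{I}{k}}{\Root{I}{i}^\vee}$, and likewise for the root function. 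Finally, if $k>j$, then with $w=\wordprod{\Q}{(i,j)\ssm F}$ and $v=\wordprod{\Q}{(j,k-1]\ssm F}$ one has $\wordprod{\Q}{[k-1]\ssm I}=uwq_jv$ and $\wordprod{\Q}{[k-1]\ssm J}=uq_iwv$; the same computation for the full index set gives $\wordprod{\Q}{[n+N]\ssm I}=uwq_j\tilde v$ and $\wordprod{\Q}{[n+N]\ssm J}=uq_iw\tilde v$ for a common tail $\tilde v$, and since both equal $\wo$, cancellation yields the flip relation $wq_j=q_iw$; substituting it back gives $\wordprod{\Q}{[k-1]\ssm I}=\wordprod{\Q}{[k-1]\ssm J}$, so again nothing changes. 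This establishes the weight formula for all $k$ with $\lambda=0$ outside $i<k\le j$, and $\lambda\in\ZZ$ because weights pair integrally with coroots.

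The first genuine point is that $\lambda\ge 0$ when $i<k\le j$. Using $W$-invariance of the pairing and $\Root{I}{i}^\vee=u(\alpha_{q_i}^\vee)$,
\[
\lambda=\dotprod{w(\omega_{q_k})}{\alpha_{q_i}^\vee}=\dotprod{\omega_{q_k}}{\big(w^{-1}(\alpha_{q_i})\big)^\vee}.
\]
Now $\wordprod{\Q}{[k-1]\ssm J}=uq_iw$ is an initial segment (prefix) of the reduced word $\wordprod{\Q}{[n+N]\ssm J}$ of $\wo$, hence is itself a reduced word, and therefore so is its suffix $q_iw$; thus $\ell(q_iw)=\ell(w)+1$, equivalently $w^{-1}(\alpha_{q_i})\in\Phiplus$. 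Since $\omega_{q_k}$ is a dominant weight and $\big(w^{-1}(\alpha_{q_i})\big)^\vee$ is a nonnegative combination of simple coroots, $\lambda\ge 0$, which gives the first assertion.

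For the brick vectors, summing the weight formula over $k=n+1,\dots,n+N$ gives $\brickVector(J)=\brickVector(I)-\Lambda\Root{I}{i}$ with $\Lambda=\sum\dotprod{\Weight{I}{n+k}}{\Root{I}{i}^\vee}\in\ZZ_{\ge 0}$, summed over those $n+k$ lying in $(i,j]$. It remains to produce one strictly positive summand, for which I first show $j>n$: the first $n$ letters $q_1,\dots,q_n$ of $\Q$ are the reduced word $\c$ of the Coxeter element $c$, hence pairwise distinct, so if $j\le n$ then $q_i$, the letters of $w=\wordprod{\Q}{(i,j)\ssm F}$, and $q_j$ would all be pairwise distinct simple reflections, making $q_iwq_j$ a reduced word of length $\ell(w)+2$; but the flip relation rearranges to $q_iwq_j=w$, which has length $\ell(w)$ --- a contradiction. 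Hence $j>n$, so $j$ itself occurs among the summation indices and lies in $(i,j]$. At $k=j$, the relation $wq_j=q_iw$ forces $w^{-1}(\alpha_{q_i})=\pm\alpha_{q_j}$, and the positivity just established pins this down to $w^{-1}(\alpha_{q_i})=\alpha_{q_j}$, so the $j$-th summand equals $\dotprod{\omega_{q_j}}{\alpha_{q_j}^\vee}=1$. Therefore $\Lambda\ge 1$, proving the second assertion.

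The main obstacle I anticipate is not any individual computation but keeping the bookkeeping of $[k-1]\ssm I$ versus $[k-1]\ssm J$ under control across the three ranges, and isolating precisely where the acyclicity of $\cw{c}$ enters: it is the distinctness of the first $n$ letters of $\cw{c}$ that forces $j>n$ and hence $\Lambda>0$, and this is exactly the step that would fail for a general word $\Q$, where one only obtains $\Lambda\ge 0$.
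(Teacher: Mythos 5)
Your argument is correct, and it essentially reconstructs the proof of the cited source: the paper itself does not prove this lemma but quotes it from Pilaud--Stump (Lemmas~4.4 \& 4.5), whose proof is exactly this kind of bookkeeping of the prefix products $\wordprod{\Q}{[k-1]\ssm I}$ under a flip, giving $\Weight{J}{k}=s_{\Root{I}{i}}(\Weight{I}{k})$ for $i<k\le j$ and $\Weight{J}{k}=\Weight{I}{k}$ otherwise, with nonnegativity from $w^{-1}(\alpha_{q_i})\in\Phiplus$. The one genuinely new wrinkle you had to handle is that in this paper the brick vector sums only over positions $n+1,\dots,n+N$, so strict positivity is not literally the cited statement; your route (showing $j>n$ via the distinctness of the first $n$ letters of $\c\cwo{c}$, and then $\lambda_j=\dotprod{\omega_{q_j}}{\alpha_{q_j}^\vee}=1$) settles this directly, whereas the paper instead relies on the observation, stated just after the definition of $\brickVector$, that $\Weight{I}{k}=\Weight{\antigreedy}{k}$ for all facets $I$ and all $k\le n$, so that differences of the truncated brick vectors coincide with those of the full Pilaud--Stump brick vectors. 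Either way the conclusion holds, and your identification of where the special word $\c\cwo{c}$ enters is accurate.
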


For a set $X\subseteq \Phiplus$ of positive roots, we set $\brickVector_X(I)=\sum\limits_{\beta \in X} \big(\Weight{I}{\beta} - \Weight{\antigreedy}{\beta}\big)$ and define the polytope $\Asso_X(\mutmatrix_\c) \subset V$ as
\[
\Asso_X(\mutmatrix_\c) := \conv\bigset{\brickVector_X(I)}{ I \text{ facet of } \clusterComplex\ }.
\]
We state the following mild generalization of \cite[Proposition~5.17]{Pilaud-Stump-2015} for the present context.
The proof given there also applies in the present generality and indeed for all \emph{root independent} subword complexes as briefly defined in \Cref{sec:examples} below.

\begin{proposition}
  \label{cor:verts_of_subsum}
  We have the Minkowski decomposition
  \[
  \Asso_X(\mutmatrix_\c) = \sum\limits_{\beta\in X} \Assocbeta.
  \]
\end{proposition}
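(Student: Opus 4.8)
The plan is to prove the Minkowski decomposition $\Asso_X(\mutmatrix_\c) = \sum_{\beta \in X} \Assocbeta$ by verifying the equality vertex-by-vertex, using the flip structure of the subword complex and \Cref{lem:weights_under_flips}. The first step is to recall that a Minkowski sum $\sum_{\beta \in X} P_\beta$ of polytopes all having the same normal fan (here, a coarsening of the $g$-vector fan) has the property that each vertex is obtained by summing the vertices of the summands that are selected by a common maximal cone; so it suffices to identify which facets $I$ of $\clusterComplex$ give vertices of $\Asso_X(\mutmatrix_\c)$ and $\Assocbeta$ respectively. Since $\brickVector_X(I) = \sum_{\beta \in X}(\Weight{I}{\beta} - \Weight{\antigreedy}{\beta})$ is by construction the sum of the points $\Weight{I}{\beta} - \Weight{\antigreedy}{\beta}$ defining $\Assocbeta$, the inclusion $\Asso_X(\mutmatrix_\c) \subseteq \sum_{\beta \in X}\Assocbeta$ is immediate. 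The content is the reverse inclusion, i.e.\ that every vertex of $\sum_{\beta\in X}\Assocbeta$ is realised by a \emph{single} common facet $I$.

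The key tool is \Cref{lem:weights_under_flips}: under an increasing flip $I \prec J$ with $I \setminus i = J \setminus j$, every weight moves by a nonnegative multiple of the \emph{same} root $\Root{I}{i}$, and the brick vector $\brickVector(I)$ moves by a strictly positive multiple of it. The same argument applies verbatim to $\brickVector_X$ for any $X \subseteq \Phiplus$ — although here one must be slightly careful, since a priori the coefficient could be $0$ if $X$ misses all the positions whose weights actually move. This is exactly the point where \emph{root independence} of the subword complex $\clusterComplex$ enters (as flagged in the statement): it guarantees that the roots $\set{\Root{I}{i}}{i \in I}$ form a basis of $V$, so that $I$ is the unique facet minimising $\dotprod{\brickVector_X(I)}{\cdot}$ in a suitable generic direction only when we control the supporting hyperplanes — more precisely, root independence ensures the normal cones $\cone\set{\Weight{I}{i}}{i \in I}$ of the brick polytope genuinely tile $V$, which is what forces the vertex of the Minkowski sum along a given generic functional to come from one facet. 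I would follow the structure of \cite[Proposition~5.17]{Pilaud-Stump-2015}: fix a generic linear functional $\ell$, let $I$ be the facet maximising $\ell$ on $\Assoc$ (equivalently, the facet whose weight cone contains $\ell$), and show that $I$ simultaneously maximises $\ell$ on each $\Assocbeta$ and on $\Asso_X(\mutmatrix_\c)$; since $\ell(\brickVector_X(I)) = \sum_{\beta\in X}\ell(\Weight{I}{\beta} - \Weight{\antigreedy}{\beta})$, the vertex of the Minkowski sum selected by $\ell$ equals $\brickVector_X(I)$, giving the reverse inclusion.

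Concretely the argument runs: (i) by \Cref{lem:weights_under_flips} and root independence, the brick polytope $\Assoc$ has normal fan refining the $g$-vector fan and its vertex in direction $\ell$ is $\brickVector(I)$ where $\ell$ lies in the weight cone of $I$; the same flip computation with weights replaced by their $\beta$-components shows $\Assocbeta$ is a weak Minkowski summand of $\Assoc$, so its vertex in direction $\ell$ is $\Weight{I}{\beta} - \Weight{\antigreedy}{\beta}$ for the same facet $I$; (ii) summing over $\beta \in X$, the vertex of $\sum_{\beta\in X}\Assocbeta$ in direction $\ell$ is $\sum_{\beta\in X}(\Weight{I}{\beta}-\Weight{\antigreedy}{\beta}) = \brickVector_X(I)$, which is a vertex of $\Asso_X(\mutmatrix_\c)$; (iii) conversely, given a vertex $v$ of $\Asso_X(\mutmatrix_\c)$, pick $\ell$ generic with $v = \brickVector_X(I)$, and by (ii) $v$ is also a vertex of the Minkowski sum. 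Hence both polytopes have the same vertex set and the decomposition follows. The main obstacle is making precise that the \emph{same} facet $I$ maximises $\ell$ over all the summands simultaneously: this is not automatic from \Cref{lem:weights_under_flips} alone, and requires invoking root independence of $\clusterComplex$ to know that the cones $\cone\set{\Weight{I}{i}}{i\in I}$ are full-dimensional and cover $V$ without overlap, so that a generic $\ell$ picks out a unique $I$ and that choice is forced on every weak summand. This is exactly the mild generalisation over \cite[Proposition~5.17]{Pilaud-Stump-2015} that the remark before the statement refers to, and the citation there is meant to carry the routine parts of the verification.
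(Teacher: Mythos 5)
Your proposal is correct and takes essentially the same route as the paper: both note that $\Asso_X(\mutmatrix_\c) \subseteq \sum_{\beta\in X}\Assocbeta$ is immediate, and then use \Cref{lem:weights_under_flips} (following the proof of \cite[Proposition~5.17]{Pilaud-Stump-2015}) to show that a generic linear functional is maximized on all the summands simultaneously by one and the same facet $I$, so that every vertex of the Minkowski sum is of the form $\brickVector_X(I)$ and hence lies in $\Asso_X(\mutmatrix_\c)$. The only cosmetic difference is that the paper runs the sign comparison directly against $\brickVector_X(\cdot)$, so it never needs to invoke root-independence or the normal fan of the full brick polytope $\Assoc$ explicitly, whereas you route the ``same facet maximizes everything'' step through $\Assoc$; both versions rest on the same local-to-global flip argument carried by the cited proposition.
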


\begin{proof}
  We may neglect the contributions of the shifts by $\Weight{\antigreedy}{\cdot}$, as these cancel in all considerations.
  By definition we have
  \[
  \Asso_X(\mutmatrix_\c) \subseteq \sum \limits_{\beta \in X} \Assocbeta.
  \]
  To obtain equality we show that every vertex of $\sum_{\beta \in X} \Assocbeta$ is also a vertex of $\Asso_X(\mutmatrix_\c)$.
  Consider a linear functional $f: V \rightarrow \RR$.
  For two adjacent facets $I \setminus i = J \setminus j$ of $\clusterComplex$ and a positive root $\beta \in X$ we have by \Cref{lem:weights_under_flips} that either
  $f( \Weight{I}{\beta} ) = f( \Weight{J}{\beta} )$ or 
  $f( \Weight{I}{\beta} ) - f( \Weight{J}{\beta} )$ has the same sign as 
  $f( \brickVector_X(I) ) - f( \brickVector_X(J) )$.
  Therefore a facet~$I_f$ maximizes $f(\brickVector_X(\cdot))$ among all facets if and only if it maximizes $f(\Weight{\cdot}{\beta})$ for every $\beta \in X$.
  
  Let now~$v$ be a vertex of the Minkowski sum $\sum_{\beta\in X} \Assocbeta$ and let $f: V \rightarrow \RR$ be a linear functional maximized at~$v$.
  Thus,~$v = \sum_{\beta \in X} v_\beta$ such that $v_\beta$ maximizes~$f$ for $\Assocbeta$.
  
  On the other hand,~$f$ is also maximized by some vertex $\brickVector_X(I_f)$ of $\Asso_X(\mutmatrix_\c)$.
  By the previous consideration,~$f$ thus maximizes $\Weight{I_{f}}{\beta}$ for every $\beta \in X$ and we obtain $v_\beta = \Weight{I_{f}}{\beta}$.
  Hence $v = \sum_{\beta \in X} \Weight{I_{f}}{\beta} = \brickVector_X(I_{f})$.
\end{proof}

The description of the Minkowski decomposition of the brick polytope in the previous proposition also yields the following corollary.

\begin{corollary}
  \label{cor:verts_of_subsum_2}
  The set of vertices of $\Asso_X(\mutmatrix_\c)$ is $\bigset{\brickVector_X(I)}{ I \text{ facet of } \clusterComplex}$.
\end{corollary}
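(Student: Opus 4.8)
The plan is to deduce the statement from the analysis already carried out in the proof of \Cref{cor:verts_of_subsum}. Since, by definition, $\Asso_X(\mutmatrix_\c)$ is the convex hull of the points $\brickVector_X(I)$, its vertex set is automatically contained in $\bigset{\brickVector_X(I)}{I \text{ facet of } \clusterComplex}$; the content of the corollary is the reverse inclusion, i.e.\ that \emph{each} $\brickVector_X(I)$ is in fact a vertex. For a fixed facet $I$ it therefore suffices to produce a linear functional $f\colon V\to\RR$ whose maximum over $\Asso_X(\mutmatrix_\c)$ is attained only at $\brickVector_X(I)$ (several facets being allowed to share this value).

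To choose $f$, I would first use that $\Assoc$ is a generalized associahedron with normal fan the $g$-vector fan, so that $\brickVector(I)$ is a vertex of $\Assoc$; pick $f$ in the interior of its normal cone, i.e.\ so that the $f$-maximal face of $\Assoc$ is the single point $\{\brickVector(I)\}$. Combining this with the Minkowski decomposition $\Assoc = \sum_{\beta\in\Phiplus}\Assocbeta$ from~\eqref{eq:natural_mink_dec} and the additivity of maximal faces under Minkowski sums shows that, for every $\beta\in\Phiplus$, the $f$-maximal face of $\Assocbeta$ is a single vertex; moreover, since $\brickVector(I)=\sum_{\beta\in\Phiplus}\bigl(\Weight{I}{\beta}-\Weight{\antigreedy}{\beta}\bigr)$ is $f$-maximal in the sum, each summand $\Weight{I}{\beta}-\Weight{\antigreedy}{\beta}$ is $f$-maximal in $\Assocbeta$, hence equals that unique $f$-maximal vertex. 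In particular, for every facet $J$, if $J$ maximizes $f\bigl(\Weight{\cdot}{\beta}\bigr)$ then $\Weight{J}{\beta}=\Weight{I}{\beta}$.

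It then remains to restrict to $\beta\in X$ and invoke the key observation from the proof of \Cref{cor:verts_of_subsum}: a facet maximizes $f\bigl(\brickVector_X(\cdot)\bigr)$ over all facets if and only if it maximizes $f\bigl(\Weight{\cdot}{\beta}\bigr)$ for every $\beta\in X$. The facet $I$ has this property, and any facet $J$ sharing it satisfies $\Weight{J}{\beta}=\Weight{I}{\beta}$ for all $\beta\in X$ by the previous step, whence $\brickVector_X(J)=\brickVector_X(I)$. Thus $f$ attains its maximum over $\bigset{\brickVector_X(J)}{J \text{ facet of }\clusterComplex}$ exactly at $\brickVector_X(I)$, so $\brickVector_X(I)$ is a vertex of $\Asso_X(\mutmatrix_\c)$, as desired.

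The one step that requires a little care, and which I would treat as the main obstacle, is the middle paragraph: one must combine the additivity of $f$-maximal faces under Minkowski sums with the elementary fact that a point $\sum_\beta p_\beta$ of a Minkowski sum is $f$-maximal precisely when each $p_\beta$ is $f$-maximal in its summand, in order to pin down the $f$-maximal vertex of $\Assocbeta$ as exactly $\Weight{I}{\beta}-\Weight{\antigreedy}{\beta}$ and not merely ``some vertex''. Everything else is a direct reuse of \Cref{lem:weights_under_flips} as already packaged in the proof of \Cref{cor:verts_of_subsum}.
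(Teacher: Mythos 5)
Your proof is correct and takes essentially the route the paper intends: the corollary is stated there as an immediate consequence of the analysis in the proof of \Cref{cor:verts_of_subsum} (via \Cref{lem:weights_under_flips}), and your choice of a functional $f$ in the interior of the normal cone of $\brickVector(I)$ in $\Assoc$, combined with the additivity of $f$-maximal faces under Minkowski sums, is exactly the routine detail the paper leaves implicit to show each $\brickVector_X(I)$ is indeed a vertex.
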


\begin{example}[$A_3$-example]
  For $X = \Phiplus \setminus \{111_\Delta\}$ the polytope $\Asso_X(\mutmatrix_\c)$ is given by
  \[ 
  \begin{tabular}{rl}
  $\Asso_X(\mutmatrix_\c)$ 
  & $= \ \conv \big\{ 21\mii{1}\mii{2},~ 21\mii{3}0,~ 2\mii{1}1\mii{2},~ 
  2\mii{2}1\mii{1},~ 2\mii{2}00,~ 12\mii{1}\mii{2},~
  12\mii{3}0,~$                                       \\[5pt]
  & $\hspace{15mm}  020\mii{2},~ 011\mii{2},~ 020\mii{2},~
  02\mii{2}0,~ 011\mii{2},~ 001\mii{1},~ 0000 \big\}$ \\[10pt]
  
  & $= \ \conv \big\{ 232_\Delta,\hspace{2px}
  230_\Delta,\hspace{2px}
  212_\Delta,\hspace{2px}
  201_\Delta,\hspace{2px}
  200_\Delta,\hspace{2px}
  132_\Delta,\hspace{2px}
  130_\Delta,\hspace{2px}$\\[5pt]
  & $\hspace{15mm}    022_\Delta,\hspace{2px} 
  012_\Delta,\hspace{2px} 
  022_\Delta,\hspace{2px}
  020_\Delta,\hspace{2px}
  012_\Delta,\hspace{2px}
  001_\Delta,\hspace{2px}
  000_\Delta \big\}$.
  \end{tabular}
  \]
  For later reference we note that $ \Root{\greedy}{6} = 111_\Delta$ and
  \[
  \brickVector_X( \{ 345 \} ) = \brickVector_X( \{ 456 \}) = 022_\Delta, \quad \brickVector_X( \{ 357 \})  = \brickVector_X( \{ 567 \}) = 012_\Delta\ .
  \]
\end{example}

\begin{example}[$B_2$-example]
  For $X = \Phiplus \setminus \{12_\Delta\}$ the polytope $\Asso_X(\mutmatrix_\c)$ is given by
  \[ 
  \begin{tabular}{rl}
  $\Asso_X(\mutmatrix_\c)$ 
  & $= \ \conv \big\{ 40,\hspace{10px}
  4\mii{4},\hspace{10px}
  22,\hspace{10px}
  02,\hspace{10px}
  02,\hspace{11px}
  00 \hspace{8pt}\big\}$ \\[5pt]
  & $= \ \conv \big\{ 22_\Delta,~ 20_\Delta,~ 12_\Delta,~ 
  01_\Delta,~ 01_\Delta,~ 00_\Delta \big\}$.
  \end{tabular}
  \]
  For later reference we note that $\brickVector_X( \{ 34 \} ) = \brickVector_X( \{ 45 \} )  = 01_\Delta$ and $12_\Delta = \Root{\greedy}{5}$.
\end{example}

We next introduce the following canonical long flip sequence in the subword complex $\clusterComplex$ from the greedy to the antigreedy facet,
\[
\greedy = I_0 \prec I_1 \prec \dots \prec I_N = \antigreedy
\]
where $I_{\ell+1}$ is obtained from $I_\ell$ by flipping the unique index~$i$ in~$I_\ell$ such that $I_{\ell+1} \setminus \{\ell+1+n\} = I_\ell \setminus\{i\}$.
Indeed, up to commutation of consecutive commuting letters, the index~$i$ is the smallest index that yields an increasing flip.
Indeed, there is some flexibility in defining this sequence---any sequence of flips corresponding to \emph{source mutations} in the associated cluster algebra would work.

\begin{example}[$A_3$-example]
  For $\c \cwo{c} = 123123121$ the canonical long flip sequence is given by
  \[
  \greedy = \{1, 2, 3\} \prec
  \{2, 3, 4\} \prec
  \{3, 4, 5\} \prec
  \{4, 5, 6\} \prec
  \{5, 6, 7\} \prec
  \{6, 7, 8\} \prec
  \{6, 8, 9\} = \antigreedy\ .
  \]
\end{example}

\begin{example}[$B_2$-example]
  For $\c \cwo{c} = 121212$ the canonical long flip sequence is given by
  \[
  \greedy = \{1, 2\} \prec
  \{2, 3\} \prec
  \{3, 4\} \prec
  \{4, 5\} \prec
  \{5, 6\} = \antigreedy\ .
  \]
\end{example}

This flip sequence already appeared in \cite[Proposition~6.7]{Pilaud-Stump-2015} and in the proof of \cite[Lemma~3.7]{BS-2018}, where in particular the following property was used.

\begin{lemma}
  \label{lem:difference_weight_sequence}
  For every index $j \in \{ n+1, \ldots , n+N \}$ there exists a unique pair $I_\ell \prec I_{\ell+1}$ in the canonical long flip sequence and an index~$i$ such that $I_\ell \setminus i = I_{\ell+1} \setminus j$.
  Moreover, in this case the weight function $\Weight{I_{\ell+1}}{\cdot}$ is obtained from $\Weight{I_\ell}{\cdot}$ by
  \begin{equation*}
  \Weight{I_{\ell+1}}{k} =
  \begin{cases}
  \Weight{I_\ell}{k} - \Root{I_\ell}{i} & \text{if } k = j , \\
  \Weight{I_\ell}{k}                    & \text{otherwise} . 
  \end{cases}
  \end{equation*}
  In particular, $\Weight{I_{\ell}}{\cdot}$ and $\Weight{I_{\ell+1}}{\cdot}$ only differ for the index~$j$.
\end{lemma}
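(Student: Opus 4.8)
The plan is to handle the easy combinatorial half first. By the construction of the canonical long flip sequence, the flip $I_\ell\prec I_{\ell+1}$ satisfies $I_{\ell+1}\setminus\{\ell+1+n\} = I_\ell\setminus\{i_\ell\}$ for a unique $i_\ell$; in particular it adds the index $\ell+1+n$ and removes $i_\ell<\ell+1+n$. Since the added indices $\ell+1+n$, for $\ell=0,\dots,N-1$, are pairwise distinct and exhaust $\{n+1,\dots,n+N\}$, a prescribed $j$ in that range is added at exactly the step $\ell=j-n-1$; and for that step any $i$ with $I_\ell\setminus i=I_{\ell+1}\setminus j$ must have $j\in I_{\ell+1}\setminus I_\ell$, which it does, so $i=i_\ell$ is forced. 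This proves the first assertion. For the rest I fix this $\ell$ and write $i=i_\ell$, $j=\ell+1+n$.

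For the weight statement I would invoke \Cref{lem:weights_under_flips} on $I_\ell\setminus i=I_{\ell+1}\setminus j$: for every $k$ one has $\Weight{I_{\ell+1}}{k}=\Weight{I_\ell}{k}-\lambda_k^{(\ell)}\,\Root{I_\ell}{i}$ with $\lambda_k^{(\ell)}\in\ZZ_{\geq 0}$, and $\brickVector(I_{\ell+1})=\brickVector(I_\ell)-\Lambda^{(\ell)}\,\Root{I_\ell}{i}$ with $\Lambda^{(\ell)}\in\ZZ_{>0}$; summing the $N$ terms of the defining sum of $\brickVector$ gives $\Lambda^{(\ell)}=\sum_{k=n+1}^{n+N}\lambda_k^{(\ell)}$. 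Because $\Weight{I}{k}=\Weight{\antigreedy}{k}$ for all $k\le n$ and all facets $I$, we already know $\lambda_k^{(\ell)}=0$ for $k\le n$, so the claim reduces to $\Lambda^{(\ell)}=1$ together with the unique unit contribution being located at $k=j$. Along the way I would record the identity $\Root{I_\ell}{i}=\Root{\greedy}{j}$: the root function at a position $p$ is unchanged by an increasing flip that introduces an index $<p$ (an easy prefix/suffix check), and the flips $I_0\prec\dots\prec I_\ell$ introduce exactly $n+1,\dots,n+\ell$, all $<j$, so $\Root{I_\ell}{j}=\Root{\greedy}{j}$; since the two positions exchanged by an increasing flip carry equal root values (a standard subword–complex fact), $\Root{I_\ell}{i}=\Root{I_\ell}{j}=\Root{\greedy}{j}$. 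In particular $\ell\mapsto\Root{I_\ell}{i_\ell}=\Root{\greedy}{\ell+1+n}$ is a bijection from the flip steps onto $\Phiplus$.

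The core is a telescoping argument. On one hand $\brickVector(\antigreedy)=0$ and $\Root{\greedy}{n+k}=\Weight{\greedy}{n+k}-\Weight{\antigreedy}{n+k}$ give $\brickVector(\greedy)=\sum_{k=1}^{N}\Root{\greedy}{n+k}=\sum_{\beta\in\Phiplus}\beta$; on the other hand, telescoping the flips gives $\brickVector(\greedy)=\sum_{\ell=0}^{N-1}\Lambda^{(\ell)}\,\Root{\greedy}{\ell+1+n}$, which by the bijection above equals $\sum_{\beta\in\Phiplus}\Lambda_\beta\,\beta$ with every $\Lambda_\beta\ge 1$. Subtracting, $\sum_{\beta\in\Phiplus}(\Lambda_\beta-1)\beta=0$ with nonnegative coefficients; since $\Phiplus$ lies in an open halfspace, positive roots admit no nontrivial nonnegative relation, so $\Lambda_\beta=1$ for all $\beta$, i.e.\ $\Lambda^{(\ell)}=1$ for every $\ell$. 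Hence $\sum_{k>n}\lambda_k^{(\ell)}=1$ with nonnegative integer summands, so there is a unique $k_0(\ell)>n$ with $\lambda_{k_0(\ell)}^{(\ell)}=1$ and $\lambda_k^{(\ell)}=0$ otherwise. To pin $k_0(\ell)=j$, I would telescope the weight function at position $n+p$: $\Root{\greedy}{n+p}=\sum_{\ell\colon k_0(\ell)=n+p}\Root{\greedy}{\ell+1+n}$. This right-hand side is nonempty for every $p$ (else $\Root{\greedy}{n+p}=0$), so $k_0$ is a surjection between two $N$-element sets, hence a bijection; each $n+p$ then has a single preimage $\ell$ and $\Root{\greedy}{n+p}=\Root{\greedy}{\ell+1+n}$, which forces $n+p=\ell+1+n$ by injectivity of $\Root{\greedy}{\cdot}$ on $\{n+1,\dots,n+N\}$. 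Therefore $k_0(\ell)=j$, giving $\lambda_k^{(\ell)}=\delta_{k,j}$, and as a by‑product $\brickVector(I_{\ell+1})=\brickVector(I_\ell)-\Root{\greedy}{j}$.

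The step I expect to be the main obstacle is exactly forcing $\Lambda^{(\ell)}=1$ — equivalently, that only the single coordinate $k=j$ of the weight function moves under a canonical flip. A priori \Cref{lem:weights_under_flips} permits arbitrary positive multiplicities and simultaneous changes at many positions, and this is precisely what happens for non‑canonical increasing flips; the rigidity is bought by the global identity $\brickVector(\greedy)=\sum_{\beta\in\Phiplus}\beta$ together with the absence of nonnegative relations among positive roots, which pins all flip multiplicities to $1$ at once. The remaining bookkeeping only uses the two elementary facts about root functions invoked above (stability under flips introducing smaller indices, and equality of root values at flip‑exchanged positions), both available from the cited subword‑complex literature.
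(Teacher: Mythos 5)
Your proposal is correct, but it takes a genuinely different route from the paper's proof. The paper argues locally and explicitly: up to commutations of consecutive commuting letters, the facet $I_\ell$ of the canonical long flip sequence is the sliding window $\{\ell+1,\dots,\ell+n\}$ of positions in $\cw{c}$, whose letters form all of $\sref$; comparing the defining prefix products for $\Weight{I_\ell}{\cdot}$ and $\Weight{I_{\ell+1}}{\cdot}$ then shows directly that only position $j=\ell+n+1$ changes, and by exactly $\Root{I_\ell}{i}=q_1\cdots q_\ell(\alpha_{q_{\ell+1}})$. You instead argue globally: \Cref{lem:weights_under_flips} bounds each flip's effect by nonnegative multiples of $\Root{I_\ell}{i}$, the identity $\brickVector(\greedy)=\sum_{\beta\in\Phiplus}\beta$ (from $\Root{\greedy}{n+k}=\Weight{\greedy}{n+k}-\Weight{\antigreedy}{n+k}$ and $\brickVector(\antigreedy)=0$) together with the absence of nontrivial nonnegative relations among positive roots forces every flip multiplicity $\Lambda^{(\ell)}$ to equal $1$, and a second telescoping at each position combined with the injectivity of $\Root{\greedy}{\cdot}$ on $\{n+1,\dots,n+N\}$ pins the unique moving coordinate to $k=j$. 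This is a valid alternative; note only that it relies on two facts about root functions under flips which this paper does not restate --- invariance of $\Root{\cdot}{p}$ under flips exchanging two positions below $p$, and $\Root{I}{j}=\Root{I}{i}$ for an increasing flip $I\setminus i=J\setminus j$ --- both available in the cited subword-complex literature (Ceballos--Labb\'e--Stump, Pilaud--Stump), as well as on the half-space property of $\Phiplus$. What your argument buys is independence from the ``sliding window'' normal form and its commutation bookkeeping; what the paper's computation buys is a shorter, essentially self-contained verification that in addition identifies $\Root{I_\ell}{i}=\Root{\greedy}{j}$ explicitly, a fact you recover only as a by-product.
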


\begin{proof}
  Up to commutations of consecutive commuting letters in the word $\cw{c} = \q_1\q_2\dots\q_{n+N}$, the facet $I_{\ell}$ consists of the letters $\q_{\ell+1}\dots\q_{\ell+n}$.
  Indeed, we may assume without loss of generality that for each $0 \leq \ell < N$ we have $I_{\ell+1}\setminus I_\ell = \{\ell+n+1\}$.
  Moreover, $\{q_{\ell+1},\dots,q_{\ell+n}\} = \sref$ (this follows, for example, from~\cite[Theorem~2.7]{CLS-14}) and the facets $I_\ell \prec I_{\ell+1}$ may be visualized inside the word $\cw{c}$ as
  \begin{align*}
  I_\ell     &= \q_1\dots\q_{\ell}\ \widehat\q_{\ell+1}\widehat\q_{\ell+2}\dots\widehat\q_{\ell+n}\ \q_{\ell+n+1}\ \q_{\ell+n+2}\ \dots\q_{n+N} \\[5pt]
  I_{\ell+1} &= \q_1\dots\q_{\ell}\         \q_{\ell+1}\widehat\q_{\ell+2}\dots\widehat\q_{\ell+n}\ \widehat\q_{\ell+n+1}\ \q_{\ell+n+2}\ \dots\q_{n+N}
  \end{align*}
  where the letters with hats are omitted and where we assumed, again without loss of generality, that $q_{\ell+1} = q_{\ell+n+1}$.
  The statement of the lemma now follows with
  \[
  \Root{I_\ell}{\ell+1} = \Root{I_\ell}{\ell+n+1} = \Root{I_{\ell+1}}{\ell+1} = -\Root{I_{\ell+1}}{\ell+n+1} = q_1\dots q_\ell(s_{q_{\ell+1}}). \qedhere
  \]
\end{proof}

This lemma yields an interesting combinatorial property of the polytopes $\Assocbeta$ that we do not use further below.

\begin{corollary}
  For every $\beta \in \Phiplus$ the segment connecting $\mathbf{0}$ and 
  $\beta$ is an edge of $\Assocbeta$.
\end{corollary}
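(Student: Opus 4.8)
The plan is to extract a chain of vertices of $\Assocbeta$ that starts at $\zerovar$, ends at $\beta$, and in which consecutive vertices are connected by edges lying on a common line with direction $\beta$; since all these points are collinear, the whole segment from $\zerovar$ to $\beta$ is then a face of $\Assocbeta$, hence an edge. First I would fix the index $j = n+k$ with $\beta = \Root{\greedy}{n+k}$, and recall from the discussion preceding the statement that $\Weight{\greedy}{\beta} - \Weight{\antigreedy}{\beta} = \Root{\greedy}{n+k} = \beta$ while $\Weight{\antigreedy}{\beta} - \Weight{\antigreedy}{\beta} = \zerovar$. Thus both $\zerovar$ and $\beta$ already appear among the defining points $\Weight{I}{\beta} - \Weight{\antigreedy}{\beta}$ of $\Assocbeta$, realized by $I = \antigreedy$ and $I = \greedy$ respectively.

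Next I would run along the canonical long flip sequence $\greedy = I_0 \prec I_1 \prec \dots \prec I_N = \antigreedy$ and look at how the single relevant coordinate $\Weight{I_\ell}{\beta} - \Weight{\antigreedy}{\beta}$ evolves. By \Cref{lem:difference_weight_sequence}, there is exactly one step $I_\ell \prec I_{\ell+1}$ at which the weight at position $j$ changes, and at that step $\Weight{I_{\ell+1}}{\beta} = \Weight{I_\ell}{\beta} - \Root{I_\ell}{i}$ where the flipped root $\Root{I_\ell}{i}$ equals $\Root{\greedy}{n+k} = \beta$ (this is precisely the identity displayed at the end of the proof of that lemma: $\Root{I_\ell}{\ell+1} = \Root{I_\ell}{\ell+n+1} = q_1\cdots q_\ell(s_{q_{\ell+1}})$, which for the unique $\ell$ with $\ell + n + 1 = j$ equals $\Root{\greedy}{n+k}$ since the root function only depends on the letters before position $j$ that are omitted, and these agree with those of $\greedy$ up to commutations). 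Hence along the sequence the value at position $j$ is constant and equal to $\beta$ for all $\ell \le$ (this step), and constant and equal to $\zerovar$ afterwards; in particular $\brickVector_{\{\beta\}}(I_m) \in \{\zerovar, \beta\}$ for every $m$, and both values are attained.

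To upgrade "$\zerovar$ and $\beta$ are both vertices" to "the segment $[\zerovar,\beta]$ is an edge", I would produce a linear functional $f$ that is maximized on $\Assocbeta$ exactly along that segment. Take $f$ to be a generic linear functional on the line through $\zerovar$ and $\beta$, extended so that $f$ is constant on that line — concretely, choose $f$ with $f(\beta) = f(\zerovar) = 0$ and $f$ strictly negative on the relative interior of $\Assocbeta$ off this line; existence of such an $f$ follows because $\Assocbeta \subseteq \operatorname{Int}(0,\Root{\greedy}{n+k})$, i.e. $\Assocbeta$ lies in the "interval" (box spanned by $\zerovar$ and $\beta$) — this is exactly the content recorded in the macro $\operatorname{Int}(0,\Root{\greedy}{n+k})$ and used in \cite{BS-2018}, and it places $[\zerovar,\beta]$ as a genuine face of the box, hence of the smaller polytope $\Assocbeta$, whose two endpoints we have just checked are vertices. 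The one step that needs care — and that I expect to be the main obstacle — is verifying cleanly that $\Assocbeta$ really is contained in the box $\{\,t\beta : 0 \le t \le 1\,\}$-spanned region so that $[\zerovar,\beta]$ is a face; for this I would either invoke the containment $\Assocbeta \subseteq \operatorname{Int}(0,\Root{\greedy}{n+k})$ as established in \cite{BS-2018} via the weight-function identities, or argue directly from \Cref{lem:weights_under_flips} that every $\Weight{I}{\beta} - \Weight{\antigreedy}{\beta}$ is obtained from $\beta$ by successively subtracting nonnegative multiples of roots that (by an induction along flips) keep the vector inside that box. Once the containment is in hand, the edge claim is immediate.
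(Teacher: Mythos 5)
Your identification of $\zerovar$ and $\beta$ as points of $\Assocbeta$ attained at $\antigreedy$ and $\greedy$, and your bookkeeping along the canonical long flip sequence via \Cref{lem:difference_weight_sequence}, match the paper. The gap is in the step that upgrades ``both endpoints are vertices'' to ``the segment is an edge''. The containment of $\Assocbeta$ in the box $\set{x \in V}{0 \leq x \leq \beta \text{ componentwise}}$ --- even if granted --- does not yield the supporting functional you describe: the segment $[\zerovar,\beta]$ is the main diagonal of that box and is \emph{not} a face of it, so no linear functional vanishes at both endpoints while being negative on the rest of the box, and hence nothing is inherited by the smaller polytope. Indeed, a polytope can lie in the box, have both $\zerovar$ and $\beta$ as vertices, and still fail to have $[\zerovar,\beta]$ as an edge (the box itself does, whenever $\beta$ has at least two nonzero coordinates in the root basis). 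So the existence of your functional $f$ is precisely the statement to be proved, not a consequence of the containment. Your fallback of tracking weights only along the canonical flip sequence does not repair this: it controls the vertices $\brickVector_{\{\beta\}}(I_m)$ for facets $I_m$ in that sequence, but other facets contribute further vertices of $\Assocbeta$ (for instance $110_\Delta$ is a vertex of $\Asso_{111_\Delta}(\mutmatrix_\c)$ in type $A_3$), and the edge property must hold against all of them.

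The paper closes exactly this gap with a structural fact you do not use: by \Cref{cor:verts_of_subsum} together with \Cref{lem:weights_under_flips}, the polytope $\Assocbeta$ is a (weak) Minkowski summand of the brick polytope $\Assoc$, whose edges correspond to flips of $\clusterComplex$; consequently the edges of $\Assocbeta$ are in one-to-one correspondence with the flips that change the weight function $\Weight{\cdot}{\beta}$. By \Cref{lem:difference_weight_sequence} there is a unique such flip in the canonical long flip sequence, and across it the shifted weight jumps from $\Weight{\greedy}{\beta}-\Weight{\antigreedy}{\beta}=\beta$ to $\zerovar$, so these two vertices are joined by an edge. If you insert this summand/edge--flip correspondence, the rest of your argument (the same computation along the canonical flip sequence) goes through; without it, the proof is incomplete.
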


\begin{proof}
  As the brick polytope $\Assoc$ realizes $\clusterComplex$ its edges are in one-to-one 
  correspondence to flips in $\clusterComplex$. 
  Combining \Cref{lem:weights_under_flips} and \Cref{cor:verts_of_subsum}
  we obtain a similar result for $\Assocbeta$ saying its edges are in one-to-one
  correspondence with flips that change the weight function $\Weight{\cdot}{\beta}$.
  Applying \Cref{lem:difference_weight_sequence} to the canonical long flip sequence
  \[
  \greedy = I_0 \prec I_1 \prec \hdots \prec I_{N-1} \prec I_N = \antigreedy
  \]
  we obtain for $\beta = \Root{\greedy}{n + i}$ that
  \[\begin{tabular}{lllll}
  $\Weight{ \greedy }{ \beta}$            &
  $\ = \ \Weight{ I_1 }{ \beta}$          &
  $\ = \ \hdots$                          &
  $\ = \ \Weight{ I_{i-1} }{ \beta}$,     &
  and                                     \\
  
  $\Weight{ I_i }{ \beta}$                &
  $\ = \ \hdots$                          &
  $\ = \ \Weight{ I_{N-1} }{ \beta}$      &
  $\ = \ \Weight{ \antigreedy }{ \beta}$. &
  \end{tabular}\]
  As  $\Weight{ \greedy }{ \beta} - \Weight{ \antigreedy }{ \beta} = \beta$ we conclude the statement.
\end{proof}

\subsection{Generators of the type cone}
\label{sec:typecones_polyhedra}

The following definitions mostly follow~\cite{Padrol-Pilaud-2019}.
Let $\fan$ be an essential complete simplicial fan in~$\RR^{d}$.
A \Dfn{polytopal realization} of $\fan$ is a convex polytope in $\RR^{d}$ whose outer normal fan agrees with~$\fan$. 
The space of all polytopal realizations of $\fan$ is called the \Dfn{type cone} of $\fan$, denoted by $\TC{\fan}$, see also~\cite{Mullen-1973}.
A parametrization of $\TC{\fan}$ is commonly described as follows.
Denote by $G \in \RR^{m\times d}$ the matrix whose rows generate the rays of~$\fan$.
Each height vector $h \in \RR^{m}$ defines a polytope
\[
P_h \ = \ \left\{ x \in \RR^{d} \mid Gx \leq h \right\} .
\]
Now the type cone of $\fan$ can be parametrized as the open polyhedral cone
\[
\TC{\fan} \ = \ \left\{ h \in \RR^{m} \mid P_h \text{ has normal fan } \fan \right\} .
\]
We write $P_h \in \TC{\fan}$ by identifying a polytope $P_h$ with its height vector $h \in \RR^{m}$.
With this definition, $\TC{\fan}$ has $d$-dimensional lineality space corresponding to translations in~$\RR^{d}$.
More specifically, for $P_h \in \TC{\fan}$ and a translation vector $b \in \RR^{d}$ we have
\[
P_h + b \ = \ P_{h+Gb} \in \TC{\fan} \enspace ,
\]
Thus the lineality space of $\TC{\fan}$ is given by the image of the matrix~$G$.
We identify $\TC{\fan}$ with its pointed quotient $\TC{\fan} / G\RR^{d}$.
The closure $\TCclosed{\fan}$ is called the \Dfn{closed type cone}. 
The faces of $\TCclosed{\fan}$ correspond to (weak) Minkowki summands of~$P$ with the same normal fan (which are coarsenings of~$\fan$).
In particular, the (extremal) generators of $\TCclosed{P}$ correspond to the indecomposable Minkowski summands of~$P$.

\medskip 

We aim at the description of the type cone $\TC{\gFan(\mutmatrix_\c)}$ of the $g$-vector fan $\gFan(\mutmatrix_\c)$ given in \Cref{thm:indecomp_columns}.
We first state the following lemma which we then use to understand the rays of the type cone.
\begin{lemma}
  \label{lem:simplicial_cone}
  Let $C \subset \RR^m$ be a full-dimensional closed polyhedral cone and let $x = x_1+\cdots+x_m$ for $x_1,\ldots,x_m \in C$ with
  \begin{enumerate}[(i)]
    \item $x$ is an interior point of~$C$ and
    \item $x - x_i \enspace$ is contained in the boundary of~$C$ for every $i \in \{ 1,\ldots,m \}$.
  \end{enumerate}
  Then $C=\cone\{x_1,\ldots,x_m\}$.
  In particular, the cone~$C$ is simplicial.
\end{lemma}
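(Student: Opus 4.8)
The plan is to show that the $m$ vectors $x_1,\dots,x_m$ are linearly independent and that $C$ cannot be strictly larger than $\cone\{x_1,\dots,x_m\}$; since $C$ is full-dimensional in $\RR^m$, linear independence of $m$ vectors in $C$ already forces $\cone\{x_1,\dots,x_m\}$ to be a full-dimensional simplicial cone, and then a supporting-hyperplane argument pins down $C$ exactly. First I would fix, for each $i$, a supporting hyperplane of $C$ at the boundary point $x - x_i$: by condition~(ii) there is a nonzero linear functional $f_i$ with $f_i \le 0$ on $C$ and $f_i(x - x_i) = 0$. Since each $x_j \in C$ we have $f_i(x_j) \le 0$ for all $j$, and summing gives $f_i(x) = \sum_j f_i(x_j) \le 0$; combined with $f_i(x) = f_i(x_i) + f_i(x-x_i) = f_i(x_i)$ and $f_i(x_i)\le 0$, together with $f_i(x-x_i)=0$ i.e. $f_i(x) = f_i(x_i)$, we get $0 = f_i(x - x_i) = \sum_{j\ne i} f_i(x_j)$ with every summand $\le 0$, hence $f_i(x_j) = 0$ for all $j \ne i$. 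Moreover $f_i(x_i) \ne 0$: otherwise $f_i$ vanishes on all of $x_1,\dots,x_m$, hence on $x$, contradicting that $x$ is an \emph{interior} point of $C$ (an interior point cannot lie on a supporting hyperplane, as $f_i$ is not identically zero on $C$). So $f_i(x_i) < 0$ while $f_i(x_j) = 0$ for $j \ne i$.

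Next I would use this to conclude linear independence: the matrix $(f_i(x_j))_{i,j}$ is diagonal with nonzero diagonal entries, so it is invertible; if $\sum_j c_j x_j = 0$ then applying each $f_i$ gives $c_i f_i(x_i) = 0$, hence all $c_j = 0$. Thus $x_1,\dots,x_m$ are linearly independent, $\cone\{x_1,\dots,x_m\}$ is a full-dimensional simplicial cone, and its $m$ facet-defining functionals are exactly the $f_i$ (up to positive scaling): indeed $\cone\{x_1,\dots,x_m\} = \{y : f_i(y) \le 0 \text{ for all } i\}$, since each $x_j$ satisfies all these inequalities and the $f_i$ form a basis of the dual space dual (up to sign) to the basis $x_1,\dots,x_m$.

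Finally, the containment $\cone\{x_1,\dots,x_m\} \subseteq C$ is immediate from $x_1,\dots,x_m \in C$ and convexity of the cone $C$. For the reverse containment, take any $y \in C$; since $f_i \le 0$ on $C$ for every $i$, we have $f_i(y) \le 0$ for all $i$, which by the description above means $y \in \cone\{x_1,\dots,x_m\}$. Hence $C = \cone\{x_1,\dots,x_m\}$, which is simplicial. The main obstacle I anticipate is the bookkeeping in the first paragraph — extracting from the two hypotheses the clean statement ``$f_i(x_i) < 0$ and $f_i(x_j) = 0$ for $j \ne i$'' — and in particular making sure the interiority of $x$ is invoked correctly to rule out the degenerate case $f_i(x_i) = 0$; everything after that is standard linear algebra and polyhedral duality.
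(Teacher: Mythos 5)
Your proof is correct, and it finishes differently from the paper's. The paper proves linear independence of $x_1,\ldots,x_m$ by contradiction (if some $x_i$ lay in the span of $X\setminus\{x_i\}$, then $x$ would lie in that span and hence on the boundary of $C$, contradicting (i)) and then concludes topologically: $\cone\{x_1,\ldots,x_m\}$ is a full-dimensional simplicial cone inside $C$ whose boundary is contained in the boundary of $C$, so it must be all of $C$. You instead make the supporting functionals $f_1,\ldots,f_m$ at the boundary points $x-x_i$ the central object, show that the matrix $\bigl(f_i(x_j)\bigr)$ is diagonal with strictly negative diagonal, and harvest both conclusions from this duality at once: linear independence directly, and the reverse inclusion via $C\subseteq\{y\in\RR^m : f_i(y)\le 0 \text{ for all } i\}=\cone\{x_1,\ldots,x_m\}\subseteq C$, which replaces the paper's boundary-containment argument by an explicit H-description of the simplicial cone. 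Your route has the advantage of making fully explicit the step the paper leaves implicit, namely why spanning considerations force boundary membership (this is exactly your computation $f_i(x_j)=0$ for $j\ne i$, using that $x-x_i$ is a sum of elements of $C$ annihilated by $f_i$), and the final inclusion then comes for free from $f_i\le 0$ on $C$; the paper's argument is shorter but asks the reader to supply essentially this supporting-hyperplane reasoning plus a separate topological closing step.
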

\begin{proof}
  Write $X = \{x_1,\ldots,x_m\}$.
  We first show that $X$ is linearly independent.
  Assuming the contrary, one may express some $x_i$ in terms of $X \setminus \{x_i\}$.
  This would mean that $x = (x - x_i) + x_i$ would be in the linear span of $X \setminus \{x_i\}$.
  By condition~(i), this point is in the interior of~$C$, while it is on the boundary by condition~(ii)---a contradiction.
  It follows that $\cone(X)$ is a simplicial full-dimensional cone inside~$C$.
  As condition~(ii) implies that its boundary is also contained in the boundary of~$C$, we conclude the statement.
\end{proof}

\begin{proof}[Proof of \Cref{thm:indecomp_columns}]
  The $g$-vector fan $\gFan(\mutmatrix_\c)$ is an essential complete simplicial fan in $\RR^{n}$ with $n+N$ rays.
  Therefore, after passing to the quotient by its $n$-dimensional lineality space, the closed type cone $\TCclosed{\gFan(\mutmatrix_\c)}$ is an $N$-dimensional pointed polyhedral cone.
  We aim at applying \Cref{lem:simplicial_cone} using the points $\set{\Assocbeta}{\beta \in \Phiplus}$.
  We have seen in~\eqref{eq:Associntypecone} that
  \[
  \Assoc = \sum\limits_{\beta \in \Phi^+} \Assocbeta
  \]
  is an interior point of $\TCclosed{\gFan(\mutmatrix_\c)}$.
  Therefore, it suffices to show that for each $\gamma \in \Phi^+$ the polytope $\Asso_{\Phiplus\setminus\{\gamma\}}(\mutmatrix_\c)$ is contained in the boundary of $\TCclosed{\gFan(\mutmatrix_\c)}$.
  
  Let $\gamma \in \Phiplus$ and let $j\in\{n+1,\ldots,n+N\}$ be the unique index such that $r(I_g,j)=\gamma$.
  \Cref{lem:difference_weight_sequence} ensures the existence of a unique index~$\ell$ such that~$j$ is contained in $I_{\ell+1}$ but not in~$I_\ell$ in the canonical long flip sequence $I_0 \prec \dots \prec I_N$.
  Since $\Weight{I_{\ell}}{\cdot}$ and $\Weight{I_{\ell+1}}{\cdot}$ only differ for the index~$j$, it follows that
  \[
  \brickVector_{\Phiplus\setminus\{\gamma\}}(I_\ell) = \brickVector_{\Phiplus\setminus\{\gamma\}}(I_{\ell+1}).
  \]
  \Cref{cor:verts_of_subsum} and the second part of \Cref{lem:weights_under_flips} now show that the number of vertices of $\Asso_{\Phiplus\setminus\{\gamma\}}(\mutmatrix_\c)$ is strictly less than the number of vertices of $\Assoc$.
  This means that it is a proper weak Minkowski summand and it is thus not contained in the interior of $\TCclosed{\gFan(\mutmatrix_\c)}$.
  Invoking \Cref{lem:simplicial_cone} yields the proposed statement
  \[
  \TCclosed{\gFan(\mutmatrix_\c)} = \cone\set{\Assocbeta}{\beta \in \Phiplus}
  \]
  and that the type cone is in particular simplicial.
\end{proof}

\subsubsection{Generators of the type cone for general spherical subword complexes}
\label{sec:examples}

We close this section with a brief discussion of properties of type cones for examples of general subword complexes.
It turns out that the situation for cluster complexes is particularly special.
Most importantly, the conclusion of \Cref{lem:difference_weight_sequence} does not hold in general for spherical subword complexes.

\medskip

The complex $\clusterComplex$ is known to have the following properties.
For a word~$\Q$, we call a spherical subword complex $\subwordComplex(\Q)$ \Dfn{root-independent} if the multiset
\[
\Roots{I} = \bigmultiset{ \Root{I}{i} }{ i \in I }
\]
is linearly independent for any (and thus every) facet~$I$
and it is \Dfn{of full support} if every position in~$\Q$ is contained in some facet (meaning that all elements of the ground set are indeed vertices).
Observe that spherical subword complexes of full support are also full-dimensional, meaning that $\Roots{I}$ generates~$V$ for any facet~$I$.
This is an immediate consequence of~\cite[Proposition~3.8]{Pilaud-Stump-2015}.
We conjecture that these properties identify cluster complexes among spherical subword complexes.

\begin{conjecture}
  Let~$Q$ be a word in~$\sref$.
  The following statements are equivalent:
  \begin{enumerate}
    \item Up to commutations of consecutive commuting letters $\Q = \c \cwo{c}$ for some Coxeter element~$c$.
    \item $\subwordComplex(\Q)$ is root-independent and of full support.
  \end{enumerate}
\end{conjecture}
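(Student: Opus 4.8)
We sketch the approach we would take for this conjecture.

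The implication $(1)\Rightarrow(2)$ is essentially known. Commutations of consecutive commuting letters change neither the subword complex nor any of its root and weight functions, so it suffices to treat $\Q=\cw{c}$, where $\subwordComplex(\Q)=\clusterComplex\cong S(\mutmatrix_\c)$ by~\eqref{eq:Smutiso}. Full support holds because, under the identification in \Cref{rem:Smutiso}, the positions $1,\dots,n$ and $n+1,\dots,n+N$ correspond to the negative simple roots and the positive roots, and every almost positive root is the $d$-vector of a cluster variable lying in some cluster, so every position lies in a facet. Root-independence is the classical fact that the roots $\Roots{I}$ attached to a cluster form a basis of $V$ -- equivalently, by \cite[Corollary~2.10]{BS-2018}, that its $g$-vectors do -- going back to \cite{R2007} and underlying \cite{CLS-14, Pilaud-Stump-2015}.

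For $(2)\Rightarrow(1)$ we first reduce the statement. As recalled just before the conjecture, full support forces $\subwordComplex(\Q)$ to be full-dimensional, so $\Roots{I}$ spans $V$ for every facet $I$; since a spanning subset of $V$ of cardinality $\dim V$ is already a basis, and facets have cardinality $m-N$, full support and root-independence are together equivalent to ``full support and $m=n+N$'' -- in particular both hypotheses are genuinely needed, as without full support the length of $\Q$ is not even determined. So it remains to prove: if $\Q$ has length $n+N$, Demazure ($0$-Hecke) product $\wo$, and every position is a vertex of $\subwordComplex(\Q)$, then $\Q$ arises from some $\cw{c}$ by commutations.

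The plan is an induction on the rank $n$ (the base case, where all positive roots are simple, reducing to an identity of multisets). One would first show that, after commutations, $\{1,\dots,n\}$ is a facet of $\subwordComplex(\Q)$ -- equivalently, since the greedy facet is the lex-minimal facet, that $\greedy=\{1,\dots,n\}$ -- by combining the vertex property of all positions with root-independence so as to preclude ``bad interleavings'' of the complement word with the first $n$ positions. Granting this, root-independence finishes the prefix at once: for the facet $\{1,\dots,n\}$ one has $\Root{\{1,\dots,n\}}{k}=\alpha_{\q_k}$ for $1\le k\le n$, so $\{\alpha_{\q_1},\dots,\alpha_{\q_n}\}$ is a basis, the letters $\q_1,\dots,\q_n$ are pairwise distinct, $\c:=\q_1\cdots\q_n$ is a reduced word for a Coxeter element $c$, and the complement $\q_{n+1}\cdots\q_{n+N}$ is a reduced word for $\wo$. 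It would then remain to show that this suffix is $\c$-sorted, hence commutation-equivalent to $\cwo{c}$: here one reduces to the maximal standard parabolic $W_{\sref\setminus\{\q_1\}}$ by a suitable deletion/restriction operation, recognizes the restricted suffix as a Coxeter-sorting word there by the inductive hypothesis, and reconstructs the $\c$-sorting word of $\wo$ via the recursion for Coxeter-sorting words \cite{R2007} (using that a Coxeter element of $W$ is obtained from one of the parabolic by prepending $\q_1$).

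The main obstacles are the two structural steps: (a) that $\greedy=\{1,\dots,n\}$ after commutations, and (b) the inductive reduction of the suffix to a parabolic. Both require translating the \emph{global} hypothesis ``every facet is root-independent'' into \emph{local} word combinatorics -- from a hypothetical bad pattern one must exhibit an explicit facet in which two of the root values are linearly dependent, which calls for precise control of which facets exist and of their root functions in the presence of nontrivial commutations (the transition of root functions under flips from \cite{Pilaud-Stump-2015} and \Cref{lem:weights_under_flips} being the natural tool) -- and the reduction in (b) is delicate because the naive deletion operations on $\Q$ move the Demazure product away from $\wo$. A possibly cleaner route is to pass to the commutation-and-rotation normal form for spherical subword complexes of \cite{CLS-14}: rotations and commutations preserve both full support and root-independence, so it would suffice to classify, up to those moves, all length-$(n+N)$ words with Demazure product $\wo$ all of whose facets are root-independent, and to verify that this class is precisely that of $\cw{c}$.
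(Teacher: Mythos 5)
There is nothing in the paper to compare your argument against: the statement you were given is stated as an open \emph{conjecture}, and the authors offer no proof (they only remark that condition~(1) is equivalent to the SIN-property of \cite[Theorem~2.7]{CLS-14} and give two examples showing that dropping either half of condition~(2) destroys the conclusion of \Cref{thm:indecomp_columns}). So the question is only whether your proposal actually settles the conjecture, and it does not. The direction $(1)\Rightarrow(2)$ is fine and essentially known, and your reduction of $(2)$ to ``full support and $|\Q|=n+N$'' is correct (root-independence bounds the facet size $|\Q|-N$ above by $n$, full support forces $\Roots{I}$ to span $V$ and hence bounds it below by $n$, and conversely a spanning multiset of cardinality $n$ is a basis). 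But the substance of $(2)\Rightarrow(1)$ is exactly the two steps you yourself flag as ``the main obstacles'': (a) that after commutations $\greedy=\{1,\dots,n\}$, i.e.\ that the first $n$ letters can be made pairwise distinct and form a facet, and (b) that the complementary reduced word for $\wo$ is then commutation-equivalent to the $\c$-sorting word, via an inductive parabolic reduction whose deletion step you admit does not obviously preserve the hypotheses (Demazure product $\wo$, full support, root-independence). Neither step is carried out, nor is a lemma identified that would convert the global hypothesis ``every facet is root-independent'' into the local forbidden-pattern statement you need; the alternative route via the rotation/commutation normal form of \cite{CLS-14} is likewise only named, not executed. In short, your write-up is a reasonable plan of attack, with a correct easy direction and a correct preliminary reduction, but the conjecture itself remains unproved by it.
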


Remark that the first property was shown to be equivalent to the so-called 
SIN-property in \cite[Theorem~2.7]{CLS-14}.
Furthermore they conjecture these subword complexes to maximize the number of facets a subword complex $\subwordComplex(\Q)$ with $|\Q| = n+N$ can have \cite[Conjecture~9.8]{CLS-14}.
\medskip

We next show that relaxing one of the two conditions yields examples for which the conclusion of \Cref{thm:indecomp_columns} does not hold.
We denote by~$P_i$ the polytope corresponding to the $i$-th column in the shifted weight table,
\[
P_i \ = \ \conv \bigset{ \Weight{I}{i} - \Weight{\antigreedy}{i} }
{ I \text{ facet of } \subwordComplex(\Q) }
\]
for the given subword complex $\subwordComplex(\Q)$.

\begin{example}[$B_2$-example]
  For $\Q = 1212121$, the subword complex $\subwordComplex(\Q)$ is of full support but not root-independent as $\Roots{ \greedy } = \Roots{ \{ 1,2,3 \} } = 
  \bigmultibrackets{ 10_\Delta,~ 01_\Delta,~ 10_\Delta }$.
  The list of facets is
  \[
  \big\{ \{1,2,3\},~ \{1,2,6\},~ \{1,3,4\},~ \{1,4,5\},~ \{1,5,6\},~ 
  \{1,6,7\},~ \{2,3,7\},~ \{3,4,7\},~ \{4,5,7\},~ \{5,6,7\} \big\}.
  \]
  One may easily check that its brick polytope is the permutahedron of type~$B_2$ whose normal fan is the Coxeter fan.
  The complete list of polytopes is
  \[
  \begin{tabular}{lll}
  $P_1$ & $= \ P_2$         & $= \ \conv \{ 00_\Delta \}$             \\
  $P_3$ & $= \ P_7$         & $= \ \conv \{ 00_\Delta,~ 10_\Delta \}$ \\
  $P_4$ & ~                 & $= \ \conv \{ 00_\Delta,~ 10_\Delta,~ 
  11_\Delta \}$             \\
  $P_6$ & ~                 & $= \ \conv \{ 00_\Delta,~ 01_\Delta,~ 
  11_\Delta \}$             \\
  $P_5$ & $= \ P_3 + P_{?}$ & $= \ \conv \{ 00_\Delta,~ 10_\Delta,~ 
  12_\Delta,~ 22_\Delta \}$,
  \end{tabular}
  \]
  where $P_{?} = \conv \{ 00_\Delta,~ 12_\Delta \}$ is a missing generator 
  of the type cone.
  Furthermore the sum of~$P_4$ and~$P_6$ can be decomposed into
  \[
  P_4 + P_6 = \conv \{ 00_\Delta,~ 10_\Delta \} +
  \conv \{ 00_\Delta,~ 01_\Delta \} +
  \conv \{ 00_\Delta,~ 11_\Delta \}.
  \]
  In particular, the type cone of the brick polytope is not simplicial.
\end{example}

\begin{example}[$B_2$-example]
  For $\Q = 212212$ the subword complex $\subwordComplex(\Q)$ is root-independent and full-dimensional but not of full support as 
  $\Roots{ \greedy } = \Roots{ \{ 1,3 \} } = 
  \bigmultibrackets{ 10_\Delta,~ 12_\Delta }$ and the list of facets is
  \[
  \big\{ \{1,3\},~ \{1,4\},~ \{3,6\},~ \{4,6\} \big\}.
  \]
  The positions $2$ and $5$ are not contained in any facet. The complete list of 
  polytopes is
  \[
  \begin{tabular}{lll}
  $P_1$ & $= \ P_2$ & $= \ \conv \{ 00_\Delta \}$ \\
  $P_3$ & $= \ P_6$ & $= \ \conv \{ 00_\Delta,~ 01_\Delta \}$ \\
  $P_5$ & $= \ P_3 + P_6$ & $= \ \conv \{ 00_\Delta,~ 02_\Delta \}$ \\
  $P_4$ & $= \ P_3 + P_{?}$ & $= \ \conv \{ 00_\Delta,~ 01_\Delta,~ 
  11_\Delta,~ 12_\Delta \}$,
  \end{tabular}
  \]
  where $P_{?} = \conv \{ 00_\Delta,~ 11_\Delta \}$ is the missing generator 
  of the type cone.
\end{example}

\section{Newton polytopes of \texorpdfstring{$F$}{F}-polynomials from subword complexes}
\label{sec:fpolys}

Let $\Alg(\mutmatrix_\c)$ be the finite type cluster algebra with acyclic initial 
mutation matrix $\mutmatrix_\c$ with principal coefficients and 
denote by $\gFan(\mutmatrix_\c)$ its $g$-vector fan. 
We have seen in~\Cref{thm:indecomp_columns} that the type cone 
$\TC{\gFan(\mutmatrix_\c)}$ is generated by the natural Minkowski summands of the 
brick polytope $\Assoc$, 
\[
\bigTC{ \gFan( \mutmatrix_\c ) } \ = \ 
\cone \bigset{ \Assocbeta }{ \beta \in \Phiplus }.
\]
A description of the generators of $\bigTC{ \gFan( \mutmatrix_\c ) }$ 
was also obtained by combining results from \cite{ABHY-2018, AHHL2020} and 
\cite{Padrol-Pilaud-2019} as follows.
In \cite[Theorem~1]{ABHY-2018} the authors provide polytopal realizations of 
$\gFan( \mutmatrix_\c )$.
This construction produces a generalized associahedron $\mathcal X_p$ for each $p \in \RR_{ > 0 }^{ \Phiplus }$.
It was then shown in~\cite[Theorem~3]{ABHY-2018} (simply-laced types) and in~\cite[Theorem~6.1]{AHHL2020} (multiply-laced types) that $\mathcal X_p$ for $p = e_\beta$ and $\beta \in \Phiplus$ equals the Newton polytope of the $F$-polynomial $F_\beta$.
In \cite[Theorem~2.26]{Padrol-Pilaud-2019}, the authors explain that within the latter constuction $\RR_{> 0}^{\Phiplus}$ can be understood as (a linear transformation of) the type cone $\bigTC{ \gFan( \mutmatrix_\c ) }$.
In particular, this establishes the fact that the Newton polytopes of the $F$-polynomials generate the type cone,
\[
\TC{ \gFan( \mutmatrix_\c ) } \ = \ 
\cone \bigset{ \Newton{F_\beta} }{ \beta \in \Phiplus }.
\]

In order to prove \Cref{cor:NewtonFpoly}, it remains to properly identify which Newton polytope of an $F$-polynomial corresponds to which Minkowski summand of the brick polytope.
This is done using the following property of $F$-polynomials.

\begin{proposition}[\cite{DWZ:2010} (simply-laced types), \cite{Dem10} (multiply-laced types)]
  \label{lem:highest_and_lowest_exponent}
  For every $\beta \in \Phiplus$, the $F$-polynomial $F_\beta = F_\beta(\y)$ has constant term~$1$ and a unique componentwise highest exponent vector given by~$\beta$.
  In particular, $\mathbf{0}$ and $\beta$ are both vertices of $\Newton{F_\beta}$.
\end{proposition}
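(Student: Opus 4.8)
The plan is to derive both assertions from the representation-theoretic description of $F$-polynomials that underlies the cited works, so the first step is to record that identification. Since $\mutmatrix_\c$ is acyclic, its underlying (valued) quiver $Q$ is an orientation of a tree and the relevant potential may be taken to be zero, so one works entirely inside the category of (valued) representations of $Q$. In the simply-laced case, \cite{DWZ:2010} attaches to the cluster variable $u_\beta$ the indecomposable representation $M = M_\beta$ of $Q$ whose dimension vector, by Gabriel's theorem, is the positive root $\beta$ expressed in the simple root basis $\Delta$, and identifies $F_\beta$ with the $F$-polynomial of $M$, namely
\[
  F_\beta(\y) \ = \ \sum_{\mathbf{e}} \chi\big(\mathrm{Gr}_{\mathbf{e}}(M)\big)\,\y^{\mathbf{e}},
\]
the sum running over dimension vectors $\mathbf{e}$ of subrepresentations of $M$, where $\mathrm{Gr}_{\mathbf{e}}(M)$ denotes the corresponding quiver Grassmannian and $\chi$ its topological Euler characteristic; \cite{Dem10} provides the analogous statement for modulated quivers in the multiply-laced case.

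Granting this identification, the second step reads both claims off the formula. A subrepresentation of $M$ has dimension vector $\mathbf{e}$ with $\mathbf{0} \le \mathbf{e} \le \underline{\dim} M = \beta$ componentwise, so every exponent vector occurring in $F_\beta$ is componentwise dominated by $\beta$. The summand $\mathbf{e} = \mathbf{0}$ contributes $\chi(\mathrm{Gr}_{\mathbf{0}}(M)) = 1$, the constant term, and the summand $\mathbf{e} = \beta$ contributes $\chi(\mathrm{Gr}_{\beta}(M)) = \chi(\{M\}) = 1$, so $\beta$ genuinely occurs and is therefore the unique componentwise highest exponent vector of $F_\beta$. I would also remark that the constant term being $1$ can be obtained without representation theory, by induction on the number of mutations using the $F$-polynomial recursion of Fomin--Zelevinsky: specializing $\y = \mathbf{0}$ annihilates one of the two numerator monomials and turns the other into a product of constant terms of previously constructed $F$-polynomials, each equal to $1$ by induction.

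For the final sentence, the third step is to apply the linear functional $\mathbf{e} \mapsto \sum_i e_i$ to the exponent vectors of $F_\beta$: it is minimized only at $\mathbf{0}$ (the componentwise-least exponent, which occurs) and maximized only at $\beta$, since any other occurring $\mathbf{e}$ satisfies $\mathbf{e} \le \beta$ and $\mathbf{e} \ne \beta$, whence $\sum_i e_i < \sum_i \beta_i$. A point of a finite set that uniquely optimizes a linear functional is a vertex of the convex hull of that set, so $\mathbf{0}$ and $\beta$ are vertices of $\Newton{F_\beta}$.

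The main obstacle is the input from the categorification of cluster algebras, i.e.\ realizing $F_\beta$ as the Derksen--Weyman--Zelevinsky / Demonet $F$-polynomial of a representation whose dimension vector is exactly $\beta$; this is the content of the cited results and is not combinatorial, whereas everything after that identification is elementary. An attempt to track the componentwise highest monomial directly through the $F$-polynomial recursion using only positivity of $F$-polynomials seems to require precisely the bookkeeping that the quiver Grassmannian description makes transparent, which is why I would route the argument through the representation theory.
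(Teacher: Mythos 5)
Your proposal is correct and follows essentially the same route as the paper, which simply cites \cite{DWZ:2010} (Proposition~3.1 and Theorem~5.1) and \cite{Dem10} (Proposition~9.4) for both the constant term and the unique componentwise-maximal exponent. You additionally unpack those citations via the quiver-Grassmannian formula in the acyclic (zero-potential) case and supply the easy linear-functional argument for the vertex statement, which the paper leaves implicit in its ``in particular''; this is a faithful elaboration rather than a different proof.
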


\begin{proof}
  For simply-laced types, this is \cite[Proposition~3.1 \& Theorem~5.1]{DWZ:2010} and for multiply-laced types, this is \cite[Proposition~9.4]{Dem10}.
\end{proof}

%

This proposition can be rechecked in types~$A_3$ and~$B_2$ in \Cref{fig:examples}.
Now we are ready to proof our second main result.

\begin{proof}[Proof of \Cref{cor:NewtonFpoly}]
  Since $\bigTC{ \gFan( \mutmatrix_\c ) }$ is a simplicial cone of dimension 
  $N=| \Phiplus |$ we already know that the two sets of generators,
  \[
  \bigset{ \Newton{ F_\beta } }{ \beta \in \Phiplus } \ \text{and} \ 
  \bigset{ \Assocbeta }{ \beta \in \Phiplus },
  \]
  are non-redundant and coincide up to scalar factors.
  Let $\beta \in \Phiplus$.
  By \Cref{lem:highest_and_lowest_exponent} the unique maximal and minimal vertices of $\Newton{F_\beta}$ are~$\beta$ and~$\mathbf{0}$, respectively.
  Since $\beta = \brickVector_{\{\beta\}}( \greedy )$ and $\mathbf{0} = \brickVector_{\{\beta\}}( \antigreedy )$, these vectors are by \Cref{cor:verts_of_subsum} vertices of $\Assocbeta$ as well. 
  Applying~\Cref{lem:weights_under_flips} we see that they are the maximal and minimal vertices of $\Assocbeta$, respectively.
  Thus the polytopes $\Newton{ F_\beta }$ and $\Assocbeta$ coincide.
\end{proof}

\section{The tropical positive cluster variety}
\label{sec:tropplus}

In this section, we prove \Cref{thm:tropplusvariety} starting from the type cone description~\eqref{eq:newtonF2} on page~\pageref{eq:newtonF2} in terms of Newton polytopes of $F$-polynomials.
It is independent of the subword complex description and does not make use of it.
We again emphasize that a more general version of \Cref{thm:tropplusvariety} follows from~\cite[Theorems~4.1 \& 4.2]{AHHL2020}.

\medskip

Following~\cite{Speyer-Williams-2005}, we start with the needed notions from tropical geometry.
Let $E \subset \ZZ_{\geq 0}^d$ be non-empty and finite and let $f=\sum_{e \in E} f_e \u^e \in \QQ[\u]$ with $f_e \neq 0$ for all $e \in E$ be a rational polynomial supported on~$E$.
For each weight $\weight \in \RR^d$ we define
\[
E(\weight) \ = \ \underset{e \in E}{\arg\max} (\weight \cdot e)\ .
\]
That is, $E(\weight)$ is the intersection of $E$ with the face of $\Newton{f}=\conv(E)$ that is maximized in direction $\weight$.
The \Dfn{tropical hypersurface} $\Trop(f) \subset \RR^d$ is the collection of those weights $\weight \in \RR^d$ for which $E(\weight)$ contains at least two elements.
$\Trop(f)$ naturally carries the structure of a polyhedral fan,  whose cones are formed by those weights $\weight \in \Trop(f)$ that yield the same~$E(\weight)$.
This fan thus agrees with the codimension-one skeleton of the normal fan of~$\Newton{f}$.

The \Dfn{positive part} $\Tropplus(f)$ of the tropical hypersurface was introduced in~\cite{Speyer-Williams-2005} and is defined as follows.
Split $E=E^+_f \sqcup E^-_f$ according to the signs of the coefficients of~$f$.
That is,
\[
E^+_f = \{ e \in E \mid f_e > 0 \}, \quad E^-_f = \{ e \in E \mid f_e < 0 \}.
\]
Now $\Tropplus(f)$ is defined as the subfan of $\Trop(f)$ consisting of those weights for which neither $E(\weight)\cap E^+_f$ nor $E(\weight)\cap E^-_f$ is empty,
\[
\Tropplus(f) = \bigset{\weight \in \RR^d}{E(\weight)\cap E^+_f \neq \emptyset ~ \text{ and } ~ E(\weight)\cap E^-_f \neq \emptyset}\ .
\]
For any ideal $\ideal \subset \QQ[\u]$ the \Dfn{positive tropical variety} $\Tropplus(\ideal)$ is defined as the intersection of all positive tropical hypersurfaces $\Tropplus(f)$ for $f\in \ideal$.

\medskip

We next move to the positive tropical variety considered here.
Let $\Alg(\mutmatrix)$ be a finite type cluster algebra of rank~$n$ with (not necessarily acyclic) initial mutation matrix~$\mutmatrix$ with principal coefficients.
We denote by $X_\Delta=\{x_1,\ldots,x_n\}$ the set of initial cluster variables and by $X_{\Phiplus}=\set{x_\beta}{\beta \in \Phiplus}$ the set of non-initial cluster variables.
Thus the set of all cluster variables is the disjoint union $X=X_\Delta \sqcup X_{\Phi^+}$.
Furthermore, let $Y=\{y_1,\ldots,y_n\}$ be the set of principle coefficient variables.
Recall that each non-initial cluster variable $x_\beta \in X_{\Phiplus}$ is expressed in terms of the initial seed by
\begin{equation*}
\label{eq:non-init-var}
x_\beta = \frac{p_\beta(\x,\y)}{\x^\beta}
\end{equation*}
where $p_\beta(\x,\y)$ is a subtraction-free polynomial in the initial cluster and coefficient variables and $\x^\beta = x_1^{\beta_1}\dots x_n^{\beta_n}$ for $\beta = (\beta_1,\cdots,\beta_n)_\Delta \in \RR^\Delta$.

Following \cite{Speyer-Williams-2005} we embed $\spec \Alg(\mutmatrix)$ as the affine variety $\variety(\ideal_{\mutmatrix}) \subset \field^{X\sqcup Y}$, where $\ideal_{\mutmatrix}$ is the ideal generated by the non-initial cluster variables, i.e.,
\[
\ideal_{\mutmatrix} \ = \ \left\langle x_\beta \cdot \x^\beta - p_\beta(\x,\y) ~ \big| ~ \beta \in \Phiplus \right\rangle .
\]
Note that in this case the special form of the generators immediately yields a subtraction-free parametrization {$\Psi: (\field^\ast)^{X_\Delta \sqcup Y} \rightarrow \variety(\ideal_{\mutmatrix})\cap (\field^\ast)^{X\sqcup Y}$} given as the graph of the map
\begin{align*}
(\field^\ast)^{X_\Delta \sqcup Y} \ &\longrightarrow \ (\field^\ast)^{X_{\Phi^+}} \enspace , \\
(\x,\y) \ &\longmapsto \ \left( \frac{p_\beta(\x,\y)}{\x^\beta} \right)_{\beta \in \Phiplus}
\end{align*}
We denote by $\Trop \Psi : \RR^{X_\Delta \sqcup Y} \rightarrow \RR^{X \sqcup Y}$ the \Dfn{tropicalization} of the map $\Psi$.
This is the piecewise linear map obtained by replacing every $\times$ in $\Psi$ with a $+$, every $/$ with a $−$ and every $+$ with a $\max$.
The following result is an immediate consequence of~\cite[Proposition~2.6]{Speyer-Williams-2005}.
\begin{proposition}
  \label{cor:param_tropplus}
  The map $\Trop \Psi: \RR^{X_\Delta \sqcup Y} \rightarrow \RR^{X \sqcup Y}$ is a piecewise linear parametrization of the positive tropical variety $\Tropplus(\ideal_{\mutmatrix})$.
\end{proposition}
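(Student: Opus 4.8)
The plan is to deduce \Cref{cor:param_tropplus} directly from \cite[Proposition~2.6]{Speyer-Williams-2005}, so the work reduces to checking that its hypotheses are met by $\Psi$ and then translating back.

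First I would check that $\Psi$ is a \emph{subtraction-free} parametrization. By the positivity of the Laurent phenomenon for cluster algebras, every numerator $p_\beta(\x,\y)$ has nonnegative integer coefficients, $p_\beta \in \NN[\x,\y]$; hence each coordinate $p_\beta(\x,\y)/\x^\beta$ of $\Psi$ is the quotient of a polynomial with nonnegative coefficients by a Laurent monomial, and together with the identity coordinates on $X_\Delta \sqcup Y$ this exhibits $\Psi$ as a map all of whose coordinates are subtraction-free rational functions on the torus $(\field^\ast)^{X_\Delta\sqcup Y}$ --- exactly the class of maps to which \cite[Proposition~2.6]{Speyer-Williams-2005} applies.

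Second, I would record that $\Psi$ is a positive parametrization of $\variety(\ideal_{\mutmatrix})$ in the sense required there. The triangular shape of the generators $x_\beta\cdot\x^\beta - p_\beta(\x,\y)$ shows that, wherever the initial variables $x_1,\dots,x_n$ do not vanish, each non-initial coordinate $x_\beta$ is forced to equal $p_\beta(\x,\y)/\x^\beta$; thus $\Psi$ restricts to an isomorphism from a dense open subset of the torus $(\field^\ast)^{X_\Delta\sqcup Y}$ onto $\variety(\ideal_{\mutmatrix})\cap(\field^\ast)^{X\sqcup Y}$. Since $\ideal_{\mutmatrix}$ is generated by $N=|\Phiplus|$ elements, every component of $\variety(\ideal_{\mutmatrix})$ has dimension at least $2n$, and the argument above shows that the image of $\Psi$ is Zariski dense in the irreducible $2n$-dimensional component meeting the open torus.

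With both facts in hand, \cite[Proposition~2.6]{Speyer-Williams-2005} gives that the tropicalization $\Trop\Psi$ --- obtained from $\Psi$ by replacing products with sums, quotients with differences and sums with maxima --- surjects onto $\Tropplus(\ideal_{\mutmatrix})$, that is, is a piecewise linear parametrization of it, which is the assertion. I expect the only real obstacle to be purely bookkeeping: carefully matching the present set-up with the conventions of \cite{Speyer-Williams-2005} (stated there in terms of positive parametrizations of affine varieties), and in particular invoking their observation that $\Tropplus(\ideal_{\mutmatrix})$ is insensitive to the chosen subtraction-free presentation of the cluster variables. Once that identification is made, nothing further is needed.
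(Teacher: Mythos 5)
Your proposal follows essentially the same route as the paper: the paper likewise observes that the special form of the generators of $\ideal_{\mutmatrix}$ makes $\Psi$ a subtraction-free parametrization of $\variety(\ideal_{\mutmatrix})\cap(\field^\ast)^{X\sqcup Y}$ (as its graph) and then records the proposition as an immediate consequence of \cite[Proposition~2.6]{Speyer-Williams-2005}, including the same remark that the Puiseux-series conventions there do not affect $\Trop\Psi$. Your additional hypothesis checks (Laurent positivity, the graph identification on the torus) are correct and only make explicit what the paper leaves implicit.
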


It should be mentioned here that in \cite{Speyer-Williams-2005} the authors are working over the field of complex Puiseux series, one of the prototype examples of a field with non-trivial valuation.
This is standard in tropical geometry since it reveals strong connections between classical algebraic geometry and tropical geometry.
The ideal $\ideal_{\mutmatrix}$ in \Cref{cor:param_tropplus} is understood over the complex Puiseux series and for the definition of a positive tropical hypersurface for a complex Puiseux polynomial we refer to~\cite{Speyer-Williams-2005}.
However, the map $\Trop \Psi$ stays unchanged when working over~$\QQ$.

\medskip

The domains of linearity of $\Trop \Psi$ form a polyhedral fan in $\RR^{X_\Delta \sqcup Y}$, which we denote by $\fan_{\Psi}$.
Following \cite[Definition~4.2]{Speyer-Williams-2005} we equip $\Tropplus(\ideal_{\mutmatrix})$ with the fan structure obtained by applying $\Trop \Psi$ to $\fan_{\Psi}$.
Whenever we refer to $\Tropplus(\ideal_{\mutmatrix})$ as a polyhedral fan we consider this fan structure.

\begin{proof}[Proof of \Cref{thm:tropplusvariety}]
  By \Cref{cor:param_tropplus} the map $\Trop \Psi: \RR^{X_\Delta \sqcup Y} \rightarrow \Tropplus(\ideal_{\mutmatrix})$ is a piecewise linear parametrization of~$\Tropplus(\ideal_{\mutmatrix})$.
  Moreover, $\Tropplus(\ideal_{\mutmatrix})$ is piecewise linearly isomorphic to $\fan_\Psi$ by construction  .
  For each $\beta \in \Phiplus$ denote by $\fan_\beta$ the normal fan of $\Newton{p_\beta}$.
  The domains of linearity of the coordinate function $\Trop \Psi_\beta(\x,\y) = \Trop (p_\beta(\x,\y) / \x^\beta)$ are the maximal cones of~$\fan_\beta$.
  Thus the domains of linearity of $\Trop \Psi$, and hence the fan structure~$\fan_\Psi$ of the positive tropical variety~$\Tropplus(\ideal_{\mutmatrix})$, are given by the common refinement of these fans~$\fan_\beta$ for $\beta \in \Phiplus$.
  
  \medskip
  
  It follows from \cite[Corollary~6.3]{FZ:CA4} that there exists an affine transformation $T: \RR^{Y} \rightarrow \RR^{X_\Delta \sqcup Y}$ such that for all $\beta \in \Phiplus$ we have
  \[
  \Newton{p_\beta} \ = \ T \left( \Newton{F_\beta} \right)\ .
  \]
  Conversely, $\Newton{F_\beta}$ is obtained from $\Newton{p_\beta}$ by the coordinate projection $\RR^{X_\Delta \sqcup Y} \rightarrow~\RR^{Y}$.
  Therefore each fan $\fan_\beta$ is linearly isomorphic to the normal fan of $\Newton{F_\beta}$.
  By \Cref{cor:NewtonFpoly} the common refinement of these normal fans is the $g$-vector fan~$\gFan$.
  This shows that $\fan_\Psi$ is piecewise linearly isomorphic to $\gFan$.
\end{proof}

\begin{example}[$B_2$-example]
  \label{ex:B2trop}
  We continue \Cref{ex:B2mutmatrix_2}.
  We denote by $X_\Delta=\{x_1,x_2\}$ the initial cluster variables and by $Y=\{y_1,y_2\}$ the principle coefficient variables.
  This yields the non-initial cluster variables
  \begin{align*}
    x_3 = x_{10_\Delta} & = (x_2^2 + y_1)/{x_1} \\[2pt]
    x_4 = x_{11_\Delta} & = (x_1 y_1 y_2 + x_2^2 + y_1)/{x_1 x_2} \\[2pt]
    x_5 = x_{12_\Delta} & = (x_1^2 y_1 y_2^2 + 2 x_1 y_1 y_2 + x_2^2 + y_1)/{x_1 x_2^2} \\[2pt]
    x_6 = x_{01_\Delta} & = (x_1 y_2 + 1)/{x_2}
  \intertext{
  as given in the above example.
  The piecewise linear map $\Trop \Psi : \RR^{X_\Delta \sqcup Y} \rightarrow \RR^{X \sqcup Y}$ has non-trivial coordinate functions}
    \Trop \Psi_{10_\Delta} & = \max(2 x_2 \, , \, y_1) - x_1 \\
    \Trop \Psi_{11_\Delta} & = \max(x_1 + y_1 + y_2 \, , \, 2 x_2 \, , \, y_1) - x_1 - x_2 \\
    \Trop \Psi_{12_\Delta} & = \max(2 x_1 + y_1 + 2 y_2 \, , \, x_1 + y_1 + y_2 \, , \, 2 x_2 \, , \, y_1) - x_1 - 2 x_2 \\
    \Trop \Psi_{01_\Delta} & = \max(x_1 + y_2 \, , \, 0) - x_2 .
  \end{align*}
  The domains of linearity of $\Trop \Psi$ define a complete four-dimensional polyhedral fan $\fan_{\Psi}$ in $\RR^{X_\Delta\sqcup Y} = \RR^4$ with two-dimensional lineality space.
  By intersecting $\fan_{\Psi}$ with the coordinate plane $\RR^{Y}$ by setting $x_1=x_2=0$ we obtain an essential $2$-dimensional fan.
  This fan is the the common coarsening of the normal fans of the $F$-polynomials $F_{10_\Delta}$, $F_{11_\Delta}$, $F_{12_\Delta}$, $F_{01_\Delta}$, see \Cref{fig:B2gfan,,fig:B2trop}. 
  The normal fans are depicted in the dual basis of the root basis, known as the coweight basis, which in this case is given as
  \[
    \nabla^\vee=\{\omega_1^\vee,\ \omega_2^\vee\}=\left\{\tfrac{1}{2}(10),\ \tfrac{1}{2}(11)\right\}.
  \]
\end{example}

\tikzstyle{poly} = [fill=blue!25, line width=1.2pt, opacity=0.75, draw=blue!40]
\tikzstyle{segment} = [draw=blue!40, line width=1.2pt, line cap=round]
\tikzstyle{vertex} = [fill=blue!50, draw=blue!50, line width=1.2pt]
\tikzstyle{fan} = [fill=red!20, opacity=0.5]
\tikzstyle{ray} = [draw=red!50, line width=1.4pt, line cap=round]
\tikzstyle{coords} = [draw=black!60,->,>=latex,line width=0.7pt, line cap=round]
\begin{figure}
 \centering
 \scriptsize
 \begin{tikzpicture}[scale=0.6]
   \clip (-2,-2) rectangle (2,2);
   \path[fan] (-2,-2) -- (2,-2) -- (2,2) -- (-2,2) -- cycle; 
   \path[ray] (2,0) -- (0,0) -- (2,2);
   \path[ray] (0,2) -- (0,0) -- (-2,2);
   \path[ray] (-2,0) -- (0,0) -- (-2,-2);
   \path[coords] (0,0) -- (1.6,0) node[above left] {\scriptsize \color{black!80} $\omega_1^\vee$};;
   \path[coords] (0,0) -- (1.6,1.6) node[left] {\scriptsize \color{black!80} $\omega_2^\vee$};    
 \end{tikzpicture}
 \quad
 \begin{tikzpicture}[scale=0.6]
    \clip (-2,-2) rectangle (2,2);
    \path[fan] (-2,-2) -- (2,-2) -- (2,2) -- (-2,2) -- cycle; 
    \path[ray] (-2,-2) -- (2,2);
    \node at (0.8,-0.8) {$y_1$};
    \node at (-0.8,0.8) {$0$};
 \end{tikzpicture}
 \quad
 \begin{tikzpicture}[scale=0.6]
    \clip (-2,-2) rectangle (2,2);
    \path[fan] (-2,-2) -- (2,-2) -- (2,2) -- (-2,2) -- cycle; 
    \path[ray] (-2,-2) -- (0,0) -- (0,2) -- (0,0) -- (2,0);
    \node at (1.05,1) {$y_1+y_2$};
    \node at (-1,0.3) {$0$};
    \node at (0.7,-1) {$y_1$};
  \end{tikzpicture}
  \quad
  \begin{tikzpicture}[scale=0.6]
   \clip (-2,-2) rectangle (2,2);
   \path[fan] (-2,-2) -- (2,-2) -- (2,2) -- (-2,2) -- cycle; 
   \path[ray] (-2,-2) -- (0,0) -- (-2,2) -- (0,0) -- (2,0);
   \node at (0.8,1) {$y_1+2y_2$};
   \node at (-1,0) {$0$};
   \node at (0.7,-1) {$y_1$};
 \end{tikzpicture}
 \quad
 \begin{tikzpicture}[scale=0.6]
   \clip (-2,-2) rectangle (2,2);
   \path[fan] (-2,-2) -- (2,-2) -- (2,2) -- (-2,2) -- cycle; 
   \path[ray] (-2,0) -- (2,0);
   \node at (0,-0.8) {$0$};
   \node at (0,0.8) {$y_2$};
 \end{tikzpicture}
 \begin{tikzpicture}
   \node at (-0.5,0) {};
   \node at (3.1,0) {$\max\{0,y_1\}$};
   \node at (5.875,0) {$\max\{0,y_1,y_1+y_2\}$};
   \node at (8.65,0) {$\max\{0,y_1,y_1+2y_2\}$};
   \node at (11.5,0) {$\max\{0,y_2\}$};
 \end{tikzpicture}
  \caption{The $g$-vector fan of type $B_2$ (left) is the common refinement of the domains of linearity of the coordinate functions of $\Trop \Psi$ from \Cref{ex:B2trop} after intersecting with the $(y_1,y_2)$-plane.}
  \label{fig:B2trop}
\end{figure}
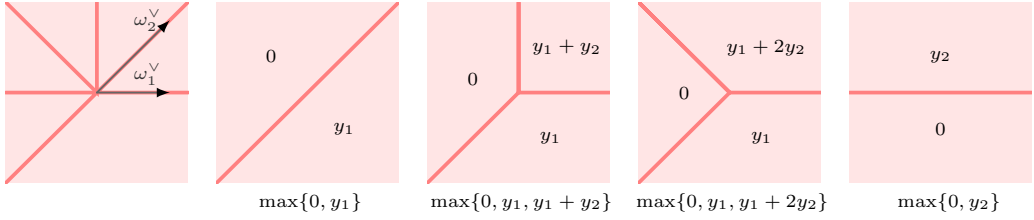

\bibliographystyle{alpha}
\bibliography{JLS2020}

\end{document}